\renewcommand\theequation{\thesection.\@arabic\c@equation}
\theoremstyle{thmstyleone}%
\newtheorem{theorem}{Theorem}[section]% meant for sectionwise numbers
\theoremstyle{thmstyletwo}%
\newtheorem{example}{Example}%
\newtheorem{remark}{Remark}%
\theoremstyle{thmstylethree}%
\newtheorem{lemma}{Lemma}
\newtheorem{corollary}{Corollary}
\begin{document}

\title[An efficient spline-based scheme on Shishkin-type meshes]{\textbf{An efficient spline-based scheme on Shishkin-type meshes for solving singularly perturbed coupled systems with Robin boundary conditions}}

\author*[1]{\fnm{Kousalya} \sur{Ramanujam}}\email{ramanujamkousalya@gmail.com}

\author[1]{\fnm{Vembu} \sur{Shanthi}}\email{vshanthi@nitt.edu}

\affil*[1]{\orgdiv{Department of Mathematics}, \orgname{National Institute of Technology}, \orgaddress{ \city{Tiruchirappalli}, \postcode{620015}, \state{Tamil Nadu}, \country{India}}}

\abstract{In this paper, we investigate a weakly coupled system of singularly perturbed linear reaction-diffusion equations with Robin boundary conditions, where the leading terms are multiplied by small positive parameters that may differ in magnitude. The solution to this system exhibits overlapping and interacting boundary layers. To appropriately resolve these layers, we employ a standard Shishkin mesh and propose a novel modification of Bakhvalov-Shishkin mesh. A cubic spline approximation is applied at the boundary conditions, while a central difference scheme is used at the interior points. The problem is then solved on both meshes. It is demonstrated that the proposed scheme achieves almost second-order convergence upto a logarithmic factor on the Shishkin mesh and exact second-order convergence on the modified Bakhvalov-Shishkin mesh. We present numerical results to validate the accuracy of our findings.}

\keywords{singular perturbation, robin boundary conditions, bakhvalov-shishkin mesh, cubic spline}

\pacs[MSC Classification]{65H10, 65L10, 65L11}

\maketitle

	%******************************** INTRODUCTION ******************************************************

\section{Introduction and model problem} \label{S: introduction}
 Singularly perturbed differential equations model multiscale phenomena characterized by sharp gradients due to the presence of small positive parameters \cite{miller2012fitted, Farrell2000, roos2008robust}. These equations frequently arise in science and engineering contexts, including fluid dynamics, semiconductor theory, chemical kinetics and turbulence modeling. The solutions to such problems often exhibit steep gradients confined to narrow regions known as boundary layers. Classical numerical methods typically fail to capture these sharp transitions accurately. Semiconductor device modeling provides a prominent example. \citet{markowich1984singular} conducted a singular perturbation analysis of the fundamental semiconductor device equations under mixed Neumann-Dirichlet boundary conditions, identifying Debye length as the perturbation parameter. Further, \citet{Markowich1984} investigated a one-dimensional modelling of a biased semiconductor device. Because of the different orders of magnitude of the solution components at the boundaries, they scale the components individually and obtain a singular perturbation problem. In a related study, \citet{brezzi2006singular} treated reverse-biased semiconductor devices as a singular perturbation problem, where the perturbation problem is related to the temperature.

Singular perturbation techniques also extend to turbulence modeling in hydraulics \cite{sreenivasan2025turbulence}. The classical $k$--$\varepsilon$ two-equation turbulence model remains a standard approach \cite{rodi2017turbulence} to capture the effect of turbulence. Building on this, \citet{thomas1998towards} developed a hierarchy of systems using such models, with each system reducing to a set of singularly perturbed second-order boundary value problems. Further applications of singularly perturbed differential equations can be found in \cite{hegarty1993key, naidu2001singular, kazakov2006computational, shafique2016boundary, turner2018elementary, lalrinhlua2025study, kumar2024impact, singh2021}. Among the wide variety of singular perturbation problems, those involving Robin-type boundary conditions have drawn considerable attention \cite{ansari2003numerical, cai2004uniform, das2012higher, selvi2017parameter, gupta2023higher, gelu2024efficient, saini2024parameter}. Motivated by the work of \citet{kumar2024impact} on combustion models involving mixed-type flux, this study investigates a class of weakly coupled linear second-order singularly perturbed systems. To address this, we develop a cubic-spline based method on Shishkin-type meshes and provide a rigorous convergence analysis demonstrating its effectiveness. The system under study is as follows:

\begin{equation}  \label{P: system general}
	L \vec{y} \equiv
	\begin{pmatrix}
		L_1 \vec{y}  \\
		L_2 \vec{y}
	\end{pmatrix} \equiv
	\begin{pmatrix}
		-\varepsilon^2 \dfrac{d^2}{dx^2} & 0                       \\
		0                               & -\mu^2 \dfrac{d^2}{dx^2}
	\end{pmatrix} \vec{y}
	+ B \vec{y}
	= \vec{f} \qquad \mbox{for}~ x \in \Omega := (0,1),
\end{equation}

\noindent where $0 < \varepsilon \leqslant \mu \leqslant 1$ (without loss of generality),

\begin{equation}  \label{P: coefficient matrix and source term}
	B = 
	\begin{pmatrix}
		b_{11}(x) & b_{12}(x) \\
		b_{21}(x) & b_{22}(x)
	\end{pmatrix}
	\mbox{~and~}
	\vec{f}(x) =
	\begin{pmatrix}
		f_1(x) \\
		f_2(x)
	\end{pmatrix}.
\end{equation}

	%********************************* ROBIN BOUNDARY CONDITIONS **************************************************

\noindent The Robin boundary conditions at $ \partial \Omega$  are given by
\begin{equation} \label{P: robin boundary conditions}
	\begin{aligned}
		R_1^{(0)}  y_1(0) &\equiv \alpha_1 y_1(0) - \varepsilon \beta_1 y_1'(0) = P_1, \quad
		R_1^{(1)}  y_1(1) \equiv \gamma_1 y_1(1) + \varepsilon \delta_1 y_1'(1) = Q_1, \\
		R_2^{(0)}  y_2(0) &\equiv \alpha_2 y_2(0) - \mu \beta_2 y_2'(0) = P_2, \quad
		R_2^{(1)}  y_2(1) \equiv \gamma_2 y_2(1) + \mu \delta_2 y_2'(1) = Q_2.
	\end{aligned}
\end{equation}

\noindent where $\alpha_i, \beta_i \geqslant 0, \alpha_i + \beta_i > 0, \gamma_i > 0$ and $\delta_i \geqslant 0$ for $i = 1,2$. The solution of \eqref{P: system general}~--~\eqref{P: robin boundary conditions} is the vector $\vec{y}(x):=\left( y_1(x), y_2(x)\right)^T.$
For brevity, we define ${R}^{(k)} \vec{y}(k) = (R_1^{(k)} {y}_1(k), ~ R_2^{(k)} {y}_2(k) )^T$, $P =(P_1, P_2)^T$ and $Q = (Q_1,    Q_2)^T$ for $k=0,1.$ Assume that for all $x \in \Bar{\Omega}$, the following holds:
\begin{equation}  \label{P: assumption on coefficient matrix negative}
	b_{12}(x) \leqslant 0, 
	\qquad  
	b_{21}(x) \leqslant 0 
\end{equation}
and there exists a constant $\lambda > 0$ such that
\begin{equation}  \label{P: assumption on coefficient matrix positive}
	\lambda^2 
	< \min \left\{ 
	b_{11}(x) + b_{12}(x), ~~ 
	b_{21}(x) + b_{22}(x) 
	\right\} \quad \text{for all} \quad x \in \Bar{\Omega}.
\end{equation}
These imply that $b_{11}(x) > | b_{12}(x) |$ and  $b_{22}(x) >  | b_{21}(x) |$ for every $x$ in $\Bar{\Omega}.$
The functions $b_{21}, b_{22}, f_1,f_2 \in \mathcal{C}^{{\,}2} (\Bar{\Omega})$ and $b_{11}, b_{12} \in \mathcal{C}^{{\,}3}(\Bar{\Omega}).$  Assume $|b_{ij}(x) | \leqslant \lambda^* $ for $1 \leqslant i,j \leqslant 2$. The derivative bounds of these functions are independent of the perturbation parameters $\varepsilon$ and $\mu$. Regarding the existence and uniqueness of the solution to \eqref{P: system general}~--~\eqref{P: robin boundary conditions}, see Section~\ref{S: a priori bounds on the solution and its derivatives}. Depending on the relation between $\varepsilon$ and $\mu,$ the following three cases can be considered:
\begin{equation*}
	\text{(i)} ~\varepsilon = \mu \in (0,1], \qquad \text{(ii)} ~ \varepsilon \in (0,1], ~\mu = 1, \qquad \text{and} \qquad \text{(iii)}~ 0 < \varepsilon \leqslant \mu \leqslant 1.
\end{equation*}

\citet{matthews2000parameter} analyzed system~\eqref{P: system general} under case~(i) with Dirichlet boundary conditions and established first-order convergence. This result was subsequently improved by \citet{linss2003improved}, who obtained second-order convergence. More recently, \citet{kaushik2024adaptive} attained second-order convergence for case~(i) using a layer-adaptive mesh. For case~(ii), \citet{matthews2002numerical} obtained nearly second-order convergence. Case~(iii) was examined in \citet{madden2003uniformly}, demonstrating nearly first-order accuracy, and further enhanced by \citet{linss2009layer}, who achieved second-order convergence on both Bakhvalov and equidistribution meshes. 
 
\citet{priyadharshini2013uniformly} proposed hybrid schemes on the Shishkin mesh for convection-diffusion systems subject to mixed-type boundary conditions, achieving nearly second-order convergence. \citet{das2013uniformly} applied a hybrid cubic spline method to \eqref{P: system general}~--~\eqref{P: robin boundary conditions} for case (i) and obtained almost second-order convergence. \citet{das2020higher} achieved second-order convergence for case (iii) using equidistributed meshes. In this work, we establish second-order convergence for the problem \eqref{P: system general}~--~\eqref{P: robin boundary conditions} in the most general case (case (iii)) using a modified Bakhvalov~--~Shishkin mesh (BS mesh). Our approach avoids the complexities associated with equidistributed meshes and still achieves second-order convergence on the relatively simple BS mesh. We also compare our results with those obtained using the standard Shishkin mesh and demonstrate that the proposed method on the modified BS mesh is more effective.

	Section~\ref{S: a priori bounds on the solution and its derivatives} discusses the maximum principle, stability results and bounds on the solution and its derivatives. Section~\ref{S: discrete problem} introduces the meshes and finite-difference operator used to approximate \eqref{P: system general}~--~\eqref{P: robin boundary conditions}. Section~\ref{S: error analysis} demonstrates that the proposed scheme achieves almost second-order convergence on standard Shishkin mesh (S~--~mesh) and second-order convergence on a modified Bakhvalov-Shishkin mesh (BS~--~mesh) in the maximum norm. Numerical experiments validating these results are provided in Section~\ref{S: numerical results}.

The symbol $C$ will represent a generic constant that may vary from line to line but remains independent of $\varepsilon, \mu$ and the mesh. When a constant is denoted with a subscript (such as $C_1$), it refers to a specific, fixed value that does not change and is  independent of the perturbation parameters and the mesh.

%*************** A PRIORI BOUNDS ON THE SOLUTION AND ITS DERIVATIVES ************************

\section{Analytical properties of the exact solution} \label{S: a priori bounds on the solution and its derivatives}

This section analyzes the continuous solution by deriving bounds on both the solution and its derivatives. The solution is split into smooth and layer components, with derivative bounds obtained for each. For any $\vec{y} \in \mathcal{C}^{{\,}0}(\Bar{\Omega}) \cap  \mathcal{C}^{{\,}2} (\Omega),$ the differential operator $L$ in \eqref{P: system general} satisfies the following maximum principle.

%************************** MAXIMUM PRINCIPLE ***********************************************

\begin{lemma}[Maximum principle]  \label{L: maximum principle}
	Assuming \eqref{P: assumption on coefficient matrix negative}~--~\eqref{P: assumption on coefficient matrix positive},  if ${  R}^{(0)} \vec{y}(0) \geqslant 0$,  ${  R}^{(1)} \vec{y}(1) \geqslant 0$ ~and~ $L \vec{y}(x) \geqslant~ 0$ for all $x \in \Omega$, then $\vec{y}(x) \geqslant 0$ for all $x \in \Bar{\Omega}.$
\end{lemma}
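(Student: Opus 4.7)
The plan is to argue by contradiction. Suppose $\vec{y} \not\geqslant 0$ on $\Bar{\Omega}$, and set
\[
p \;=\; \min\bigl\{\min_{x \in \Bar{\Omega}} y_1(x),\; \min_{x \in \Bar{\Omega}} y_2(x)\bigr\} \;<\; 0,
\]
attained by some component $y_k$ at a point $x^* \in \Bar{\Omega}$; let $j$ denote the other index and $\varepsilon_k \in \{\varepsilon,\mu\}$ the perturbation parameter appearing in the $k$-th equation. In every case below I would exploit the sign hypotheses \eqref{P: assumption on coefficient matrix negative}--\eqref{P: assumption on coefficient matrix positive} together with $y_j(x^*) \geqslant p$ to obtain the key estimate
\[
b_{k1}(x^*) y_1(x^*) + b_{k2}(x^*) y_2(x^*) \;\leqslant\; \bigl(b_{kk}(x^*) + b_{kj}(x^*)\bigr)\, p \;<\; \lambda^2 p \;<\; 0,
\]
the first inequality holding because $b_{kj}\leqslant 0$ is being applied to a quantity bounded below by $p$.

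For the interior case $x^* \in (0,1)$, one has $y_k'(x^*) = 0$ and $y_k''(x^*) \geqslant 0$, hence $-\varepsilon_k^2 y_k''(x^*) \leqslant 0$, and adding this to the displayed estimate yields $L_k \vec{y}(x^*) < 0$, contradicting $L \vec{y} \geqslant 0$. The right endpoint $x^* = 1$ is equally direct: $y_k'(1) \leqslant 0$, and combining $\gamma_k > 0$, $\delta_k \geqslant 0$ with $p<0$ forces $R_k^{(1)} y_k(1) = \gamma_k p + \varepsilon_k \delta_k y_k'(1) < 0$, a contradiction.

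The case $x^* = 0$ splits according to the Robin coefficient. If $\alpha_k > 0$, then $y_k'(0) \geqslant 0$ immediately gives $R_k^{(0)} y_k(0) = \alpha_k p - \varepsilon_k \beta_k y_k'(0) < 0$. If $\alpha_k = 0$, then $\beta_k > 0$ by $\alpha_k+\beta_k>0$, and the Robin inequality only yields $y_k'(0) \leqslant 0$; combined with $y_k'(0) \geqslant 0$ this pins down $y_k'(0) = 0$. A one-sided Taylor expansion at the boundary minimum then forces $y_k''(0) \geqslant 0$, and extending $L\vec{y} \geqslant 0$ to $x=0$ by continuity reproduces the interior estimate $L_k \vec{y}(0) < 0$.

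I expect this pure-Neumann sub-case at $x=0$ to be the main obstacle, since it is the only configuration in which the Robin condition alone does not close the argument and one has to return to the differential inequality; it also implicitly requires enough regularity of $\vec{y}$ at the endpoint to speak of $y_k''(0)$, a point I would either build into a standing hypothesis on $\vec{y}$ or justify from the smoothness assumed on $B$ and $\vec{f}$. All other cases reduce to the single algebraic estimate above and are essentially routine.
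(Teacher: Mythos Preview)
Your direct contradiction argument is sound in every case except the one you yourself flag, and it takes a different route from the paper. The paper does not work with the minimum of $\vec y$ directly; it introduces the barrier $\vec t(x)=(2-x,2-x)^T$, verifies $\vec t>0$, $L\vec t>0$ and $R^{(k)}\vec t(k)>0$, sets $\eta=\max\{\max_x(-y_1/t_1),\max_x(-y_2/t_2)\}>0$, and argues with $\vec y+\eta\vec t\geqslant0$, which vanishes at some $x^*$. The payoff is precisely at the degenerate left boundary: since $R_k^{(0)}t_k(0)=2\alpha_k+\varepsilon_k\beta_k>0$ even when $\alpha_k=0$, one gets $R_k^{(0)}(y_k+\eta t_k)(0)\leqslant0$ from the minimum (a first-derivative statement only) against $R_k^{(0)}y_k(0)+\eta R_k^{(0)}t_k(0)>0$ from the hypotheses, with no appeal to $y_k''(0)$ or to a continuity extension of $L\vec y\geqslant0$ to the endpoint.

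Under the regularity actually assumed before the lemma, $\vec y\in\mathcal C^{\,0}(\bar\Omega)\cap\mathcal C^{\,2}(\Omega)$, neither of your proposed fixes is satisfactory: adding $C^2$ up to the boundary strengthens the statement, and borrowing regularity from $B,\vec f$ conflates the abstract maximum principle with properties of a particular solution. The barrier trick is the standard device that closes the pure-Neumann case without either concession. Your approach is more transparent in the generic cases and makes the role of \eqref{P: assumption on coefficient matrix positive} explicit through the clean estimate $(b_{kk}+b_{kj})p<\lambda^2 p<0$, but the auxiliary function is what buys the degenerate boundary case at no extra regularity cost.
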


\begin{proof} 
	~Define $\vec{t}(x)= \left( 2-x, 2-x \right)^T,$ which satisfies $\vec{t}(x) > 0$ and $L \vec{t}(x) > 0$ for all $x \in \Bar{\Omega}$, along with the boundary conditions 
	\begin{align}  \label{boundary conditions for t}
		R^{(0)} \vec{t}(0) > 0 
		\quad \text{and} \quad  
		R^{(1)} \vec{t}(1) > 0.
	\end{align}
	\noindent	Assuming the theorem is false, define $\eta = \max \left\{ \max_{x \in \Bar{\Omega}} \left( {-y_1}/{t_1} \right), \max_{x \in \Bar{\Omega}} \left( {-y_2}/{t_2} \right) \right\}. $ Also, 
	
	\begin{align}  \label{Eq: y + eta t > 0}
		\vec{y} + \eta \vec{t}(x) \geqslant 0 
		\quad \text{for all} \quad x \in \Bar{\Omega}.
	\end{align}

	\noindent Since $\eta>0,$ choose $x^*$ such that $\left( -y_1 / t_1 \right)(x^*) = \eta$ or $\left( -y_2 / t_2 \right)(x^*) = \eta.$ 
	
	\noindent	\textit{Case 1.} If $\left( y_1 + \eta t_1 \right)(x^*) = 0$ at $x^* = 0,$ then by \eqref{Eq: y + eta t > 0}, $y_1 + \eta t_1$ attains a minimum at $x^*,$ implying $  R_1^{(0)} \left( y_1 + \eta t_1 \right)(x^*) = 0.$ However, using the hypothesis and \eqref{boundary conditions for t}, we obtain $  R_1^{(0)} \left( y_1 + \eta t_1 \right)(x^*) > 0,$ leading to a contradiction.
	
	\noindent \textit{Case 2.} If $\left( y_1 + \eta t_1 \right)(x^*) = 0$ at $x^* = 1,$ the argument follows as in Case 1, leading to a contradiction.
	\noindent \textit{Case 3.} Suppose $\left( y_1 + \eta t_1 \right)(x^*) = 0$ holds for some $x^* \in \Omega,$ the hypothesis gives $L_1 ( \vec{y} + \eta \vec{t}{\,}) (x^*) > 0,$ while \eqref{Eq: y + eta t > 0} implies $L_1 ( \vec{y} + \eta \vec{t}{\,}) (x^*) \leqslant 0,$  a contradiction. \\
	Applying the same reasoning to $y_2 + \eta t_2$ completes the proof, showing $\vec{y}(x) \geqslant 0$	for all $x \in \Bar{\Omega}.$     \hfill
\end{proof}

	%*************************** COMPARISON PRINCIPLE FOR REACTION DIFFUSION SYSTEM WITH ROBIN BCs ******************

A fundamental property of the operator $L$ corresponding to the coupled system \eqref{P: system general} is presented in the following Lemma.
\begin{lemma}[Comparison principle]  \label{L: comparison principle}
	If $  R^{(0)} \vec{y}(0) \geqslant |   R^{(0)} \vec{z}(0) |$, $  R^{(1)} \vec{y}(1) \geqslant |   R^{(1)} \vec{z}(1) |$ and $L \vec{y} \geqslant | L \vec{z} | $ on $\Omega,$ then $\vec{y}(x) \geqslant | \vec{z}(x)| $ for all $x \in \Bar{\Omega}.$
\end{lemma}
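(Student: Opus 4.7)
The plan is to deduce the comparison principle directly from the maximum principle established in Lemma~\ref{L: maximum principle}, using the standard trick of applying the maximum principle to the sum and difference of the two vectors. Recall that all vector inequalities and the absolute value are interpreted componentwise.

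First I would introduce the two auxiliary vector functions
\[
  \vec{w}^{\pm}(x) := \vec{y}(x) \pm \vec{z}(x),
\]
which lie in $\mathcal{C}^{\,0}(\Bar{\Omega}) \cap \mathcal{C}^{\,2}(\Omega)$ whenever $\vec{y}$ and $\vec{z}$ do. Since $L$ is linear, $L\vec{w}^{\pm} = L\vec{y} \pm L\vec{z}$, and by the componentwise bound $\pm L\vec{z} \geqslant -|L\vec{z}|$ together with the hypothesis $L\vec{y} \geqslant |L\vec{z}|$, we get
\[
  L\vec{w}^{\pm}(x) \geqslant |L\vec{z}(x)| - |L\vec{z}(x)| = 0 \quad \text{for all } x \in \Omega.
\]
An identical calculation, using linearity of the boundary operators $R^{(0)}$ and $R^{(1)}$, yields $R^{(0)}\vec{w}^{\pm}(0) \geqslant 0$ and $R^{(1)}\vec{w}^{\pm}(1) \geqslant 0$.

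Next I would apply Lemma~\ref{L: maximum principle} separately to $\vec{w}^{+}$ and to $\vec{w}^{-}$, obtaining $\vec{y}(x) + \vec{z}(x) \geqslant 0$ and $\vec{y}(x) - \vec{z}(x) \geqslant 0$ for every $x \in \Bar{\Omega}$. Componentwise, the two inequalities say $y_i(x) \geqslant z_i(x)$ and $y_i(x) \geqslant -z_i(x)$ for $i = 1,2$, which together give $y_i(x) \geqslant |z_i(x)|$, i.e.\ $\vec{y}(x) \geqslant |\vec{z}(x)|$ on $\Bar{\Omega}$.

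I do not anticipate any real obstacle here; the only point requiring care is the componentwise interpretation of $|\cdot|$ and of the vector inequalities, and the verification that the boundary operators $R_i^{(k)}$ — which mix point values and derivatives — still obey the bound $R_i^{(k)} w_i^{\pm} \geqslant 0$ after taking sums/differences. Both of these follow immediately from linearity of $R_i^{(k)}$ in its argument and from the triangle-type inequality $\pm a \geqslant -|a|$ applied componentwise.
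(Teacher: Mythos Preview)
Your proof is correct and follows the same overall strategy as the paper: reduce the comparison principle to the maximum principle of Lemma~\ref{L: maximum principle}. The paper's version is more compressed --- it applies Lemma~\ref{L: maximum principle} once to $\vec{u} = \vec{y} - |\vec{z}|$ --- whereas you apply it twice, to $\vec{y}+\vec{z}$ and $\vec{y}-\vec{z}$. Your variant has the minor advantage that $\vec{y}\pm\vec{z}$ are manifestly in $\mathcal{C}^{\,0}(\Bar{\Omega})\cap\mathcal{C}^{\,2}(\Omega)$, so Lemma~\ref{L: maximum principle} applies directly without any worry about the smoothness of $|\vec{z}|$; otherwise the two arguments are equivalent.
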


\begin{proof}
	~	Let $\vec{u}(x) = \vec{y}(x) - |\vec{z}(x)|.$ Applying Lemma~\ref{L: maximum principle} to $\vec{u},$ the desired result follows.
\end{proof}

	We call $\vec{y}$ a barrier function for $\vec{z},$ in the context of Lemma~\ref{L: comparison principle}. Under assumptions \eqref{P: assumption on coefficient matrix negative}~--~\eqref{P: assumption on coefficient matrix positive}, the reaction-diffusion system \eqref{P: system general} admits a unique classical solution $\vec{y} \in \mathcal{C}^4 (\Bar{\Omega}) \times \mathcal{C}^4 (\Bar{\Omega}).$ This result follows from Lemma~\ref{L: comparison principle} and standard analytical techniques (cf.\ \citet{ladyzhenskaya1968linear} ).

	%********************** STABILITY RESULT ***************************************************************
\begin{corollary}[Stability estimate]   \label{L: stability result} 
	If $\vec{y}$ is the solution of \eqref{P: system general}~--~\eqref{P: assumption on coefficient matrix positive}, 
	then $\vec{y}(x)$ satisfies the following stability bound:
	\begin{equation*}
		\| \vec{y}{\,} \| \leqslant 
		C \max \left\{ 
		\left \| R^{(0)} \vec{y}(0) \right \|, 
		\left \| R^{(1)} \vec{y}(1) \right \|, 
		\dfrac{1}{\lambda^2} \| L \vec{y}{\,} \|
		\right\}.
	\end{equation*}	
\end{corollary}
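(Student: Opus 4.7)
The strategy is to apply the comparison principle, Lemma~\ref{L: comparison principle}, with a carefully chosen barrier function. Write $M$ for the right-hand maximum $\max\{\|R^{(0)}\vec{y}(0)\|, \|R^{(1)}\vec{y}(1)\|, \lambda^{-2}\|L\vec{y}\|\}$, so the goal reduces to producing a constant $C$, independent of $\varepsilon, \mu$, such that $\|\vec{y}\| \leqslant C M$.

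The plan is to build a barrier $\vec{\psi}(x) = C_0 M\, \vec{\phi}(x)$ from a fixed vector function $\vec{\phi}$ satisfying three positivity bounds that are uniform in $\varepsilon, \mu$:
\begin{equation*}
L\vec{\phi}(x) \geqslant \lambda^2 (1,1)^T, \qquad R^{(0)}\vec{\phi}(0) \geqslant c (1,1)^T, \qquad R^{(1)}\vec{\phi}(1) \geqslant c (1,1)^T,
\end{equation*}
for some $c > 0$ independent of the perturbation parameters. A natural candidate is the linear function $\vec{\phi}(x) = (a - x, a - x)^T$ already employed in Lemma~\ref{L: maximum principle}, with the free parameter $a$ chosen large enough that $\gamma_i(a-1) - \varepsilon_i \delta_i$ has a positive lower bound uniformly in $\varepsilon_i \in (0,1]$; for example, $a = 2 + 2\max_i\{\delta_i/\gamma_i\}$ gives $R_i^{(1)}\phi_i(1) \geqslant \gamma_i$. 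At $x=0$ one has $R_i^{(0)}\phi_i(0) = a\alpha_i + \varepsilon_i \beta_i$, and since $\alpha_i + \beta_i > 0$ with $\alpha_i > 0$ this is bounded below by $a \alpha_i$ uniformly in $\varepsilon_i$. The bulk operator satisfies $L_i \vec{\phi}(x) = (b_{i1} + b_{i2})(a - x) \geqslant \lambda^2$ by~\eqref{P: assumption on coefficient matrix positive}. Scaling by $C_0 \geqslant \max\{1,\, 1/c\}$ then guarantees $L\vec{\psi} \geqslant \|L\vec{y}\|(1,1)^T$ and $R^{(k)}\vec{\psi}(k) \geqslant \|R^{(k)}\vec{y}(k)\|(1,1)^T$ for $k=0,1$.

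At that point the hypotheses of Lemma~\ref{L: comparison principle} hold with $\vec{\psi}$ as barrier for $\vec{y}$, yielding $|\vec{y}(x)| \leqslant \vec{\psi}(x) \leqslant a C_0 M$ pointwise on $\bar{\Omega}$, hence the desired stability bound with $C := a C_0$. The main obstacle is the uniform choice of the constants $a$ and $C_0$ at the boundaries: the condition at $x=1$ forces $a$ to absorb the $\varepsilon_i \delta_i$ term uniformly (handled by the choice above), and the condition at $x=0$ is comfortable as long as $\alpha_i > 0$; the pure Neumann subcase $\alpha_i = 0$ would require enriching $\vec{\phi}$ (for instance by an $O(1)$ function whose Robin trace at $0$ is non-degenerate uniformly in $\varepsilon_i$) to avoid an $\varepsilon_i^{-1}$ factor in $C$.
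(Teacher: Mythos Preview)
Your approach is essentially the same as the paper's: construct a barrier and apply the maximum/comparison principle. The paper, however, uses the simpler \emph{constant} barrier $\vec{\psi}^{\pm}(x) = CM(1,1)^T \pm \vec{y}(x)$ and invokes Lemma~\ref{L: maximum principle} directly. Since the derivative of a constant vanishes, $R_i^{(1)}$ applied to the constant $1$ is simply $\gamma_i > 0$, so there is no need to introduce and tune the free parameter $a$; your linear choice $(a-x, a-x)^T$ is extra work with no payoff at the right endpoint.

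Your caution about the pure Neumann case $\alpha_i = 0$ is, on the other hand, a real point that the paper's proof glosses over: for the constant barrier one has $R_i^{(0)}(1) = \alpha_i$, which also degenerates when $\alpha_i = 0$. So the paper's ``straightforward to verify'' hides exactly the same difficulty you flag, and in that subcase both arguments, as written, fail to give a constant $C$ that is uniform in $\varepsilon_i$. Your suggestion to enrich the barrier (e.g.\ by a boundary-layer-type function whose scaled normal derivative at $0$ is of order one) is the right instinct for closing that gap.
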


	%****************************** PROOF OF STABILITY RESULT *******************************************************

\begin{proof}
	~ Define the barrier functions
	\begin{align*}
		\vec{\psi}^\pm(x) 
		= C \max \left\{ 
		\left\| R^{(0)} \vec{y}(0) \right\|,\,
		\left\| R^{(1)} \vec{y}(1) \right\|,\,
		\dfrac{1}{\lambda^2} \| L \vec{y}{\,} \|
		\right\} 
		\begin{pmatrix}
			1 \\
			1
		\end{pmatrix}
		\pm \vec{y}(x), \quad x \in \Bar{\Omega}.
	\end{align*}
	
	It is straightforward to verify that ${  R}^{(0)} \vec{\psi}^\pm(0) \geqslant 0, ~ 
	{  R}^{(1)} \vec{\psi}^\pm(1) \geqslant 0$ and $L \vec{\psi}^\pm(x) \geqslant 0$ on $\Omega.$ Applying Lemma~\ref{L: maximum principle} yields $\vec{\psi}^\pm \geqslant 0$ on $\Bar{\Omega}$ and the required result follows.       
\end{proof}

	%************************** BOUNDS FOR THE SOLUTION DERIVATIVES ***************************************

\noindent	A bound on the solution $\vec{y}$ of the Problem~\eqref{P: system general}~--~\eqref{P: robin boundary conditions} and its derivatives, is given in the following Lemma.
\begin{lemma}[Bounds for the solution derivatives]  \label{L: bounds for the solution derivatives}
	Let $\vec{y}=(y_1,y_2)^T$ be the solution of \eqref{P: system general}~--~\eqref{P: robin boundary conditions}. Then for $k=1,2,$	
	\begin{alignat}{2}
		\| y_1^{(k)} \|       &\leq C ( 1 + \varepsilon^{-k} ), \quad 
		&\quad \| y_2^{(k)} \|       &\leq C ( 1 + \mu^{-k} ), \label{Eq: bounds on solution and derivatives 1} \\[4pt]
		\| y_1^{(k+2)} \|     &\leq C \varepsilon^{-2} ( \varepsilon^{-k} + \mu^{-k} ), \quad  
		&\quad \| y_2^{(k+2)} \|     &\leq C \mu^{-2} ( \varepsilon^{-k} + \mu^{-k} ). \notag
	\end{alignat}
	
\end{lemma}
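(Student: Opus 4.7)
The plan is a bootstrap argument in three stages: first obtain $\|\vec{y}\|\le C$ from the stability estimate, then read off the second derivatives from the differential equations, recover the first derivatives by a mean-value trick, and finally differentiate the system to reach the third and fourth derivatives.

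Stage 1. Because the Robin data $P,Q$ and the source $\vec{f}$ are fixed independently of $\varepsilon,\mu$, Corollary~\ref{L: stability result} gives $\|\vec{y}\|\le C$. Rewriting \eqref{P: system general} algebraically as $\varepsilon^{2}y_1''=b_{11}y_1+b_{12}y_2-f_1$ and $\mu^{2}y_2''=b_{21}y_1+b_{22}y_2-f_2$, and using $|b_{ij}|\le\lambda^{*}$ together with the bound just obtained on $\vec{y}$, immediately yields $\|y_1''\|\le C\varepsilon^{-2}$ and $\|y_2''\|\le C\mu^{-2}$.

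Stage 2. For $x\in[0,1-\varepsilon]$, the mean value theorem produces $\xi\in(x,x+\varepsilon)$ with $y_1'(\xi)=\varepsilon^{-1}\bigl(y_1(x+\varepsilon)-y_1(x)\bigr)$, so $|y_1'(\xi)|\le 2\|y_1\|/\varepsilon$. Writing $y_1'(x)=y_1'(\xi)-\int_{x}^{\xi}y_1''(s)\,ds$ and substituting the Stage~1 bound on $y_1''$ gives $|y_1'(x)|\le C\varepsilon^{-1}$. The subinterval $[1-\varepsilon,1]$ is handled identically with $\xi\in(x-\varepsilon,x)$, yielding $\|y_1'\|\le C(1+\varepsilon^{-1})$; the same argument with $\mu$ replacing $\varepsilon$ gives $\|y_2'\|\le C(1+\mu^{-1})$.

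Stage 3. Differentiating the first equation once and then twice (permissible since $b_{11},b_{12}\in\mathcal{C}^{3}$ and $f_1\in\mathcal{C}^{2}$) produces identities of the schematic form $\varepsilon^{2}y_1^{(k+2)}=\sum_{j\le k}(\text{bounded data})\cdot y_i^{(j)}\ +\ (\text{data})$; explicitly,
\[
  \varepsilon^{2}y_1'''=b_{11}'y_1+b_{11}y_1'+b_{12}'y_2+b_{12}y_2'-f_1',
\]
and one further differentiation adds terms involving $y_1''$ and $y_2''$. Inserting the bounds from Stages~1 and~2 gives $\|y_1^{(3)}\|\le C\varepsilon^{-2}(\varepsilon^{-1}+\mu^{-1})$ and $\|y_1^{(4)}\|\le C\varepsilon^{-2}(\varepsilon^{-2}+\mu^{-2})$. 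The estimates for $y_2^{(3)}$ and $y_2^{(4)}$ follow identically from the second equation with $\mu^{-2}$ in the prefactor.

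The calculus itself is routine; the genuine care is needed in Stage~2, where the mean value argument must be arranged so that the resulting constant depends on neither $\varepsilon$ nor $\mu$ (when $\varepsilon$ is close to $1$ the shrinking interval $[0,1-\varepsilon]$ is replaced by a fixed-length subinterval, which is harmless because the problem ceases to be singularly perturbed in that regime). The structural point to keep in mind is that the coupling term $b_{12}y_2^{(k)}$ in the first equation forces each higher derivative of $y_1$ to pick up the $\mu^{-k}$ contribution coming from $y_2$, which is precisely the origin of the mixed factor $(\varepsilon^{-k}+\mu^{-k})$ in the final bounds.
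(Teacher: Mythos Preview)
Your argument is correct and follows the same standard bootstrap that the paper invokes: the paper's own proof merely cites \cite{miller2012fitted} for the $k=1$ first-derivative bound and says the remaining estimates follow from that and the stability result, which is exactly your Stage~1--Stage~3 scheme written out in full. You have supplied the details the paper omits; there is no substantive difference in approach.
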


	\begin{proof}
	~ The case $k=1$ in \eqref{Eq: bounds on solution and derivatives 1} is proved in \citet{miller2012fitted}. The other bounds follow from \eqref{Eq: bounds on solution and derivatives 1} and Lemma~\ref{L: stability result}.      
	\hfill
\end{proof}

	%******************************* DECOMPOSITION OF THE SOLUTION ************************************8

In order to derive sharper bounds needed for the error estimates, we decompose the solution into \lq smooth\rq~ and \lq layer\rq~ components. This is achieved by formulating auxiliary problems for each part, as proposed in \citet{das2020higher}, based on the original system \eqref{P: system general}~--~\eqref{P: robin boundary conditions}. Let

\begin{equation*}  \label{Eq: solution decomposition}
	\vec{y} = \vec{v} + \vec{w},
\end{equation*}
where $\vec{v}=\left( v_1, v_2 \right)^T$ is the solution to the problem
\begin{align}   \label{Eq: solution decomposition smooth part}
	L \vec{v} = \vec{f} ~  \text{on} ~ \Omega, \quad
	{R}^{(0)} \vec{v}(0) = B(0)^{-1} \vec{f}(0), \quad {R}^{(1)} \vec{v}(1) = B(1)^{-1} \vec{f}(1),
\end{align}
and $\vec{w} = \left( w_1, w_2 \right)^T$ is the solution to the problem
\begin{align}   \label{Eq: solution decomposition layer part}
	L \vec{w} = \vec{0} ~ \text{on} ~ \Omega, \quad {R}^{(0)} \vec{w}(0) =   P -{  R}^{(0)} \vec{v}(0), \quad {R}^{(1)} \vec{w}(1) =   Q -{  R}^{(1)} \vec{v}(1).
\end{align}
Let us define the following layer functions for analysis of the layer part
	\begin{align*}  
		\mathbb{B}_{\varepsilon}(x) &= \exp \left( -x \lambda / \varepsilon \right) + \exp \left( -(1-x) \lambda / \varepsilon \right), \\
		\mathbb{B}_{\mu}(x)         &= \exp \left( -x \lambda / \mu \right) + \exp \left( -(1-x) \lambda / \mu \right).
	\end{align*}

	%*************************** DERIVATIVE ESTIMATE OF THE SMOOTH AND LAYER COMPONENTS *******************************

\noindent The lemma below gives derivative bounds for the smooth \eqref{Eq: solution decomposition smooth part} and layer components \eqref{Eq: solution decomposition layer part} of the solution $\vec{y}$.
\begin{lemma}  \label{L: derivative estimates of smooth and layer components}
	For all $x \in \Bar{\Omega},$ the smooth component $\vec{v}$ defined in \eqref{Eq: solution decomposition smooth part} satisfies:
	\begin{align}  \label{Eq: v1 v2 k=0 to 4}
		& \| v_1^{(k)} \| \leqslant C ( 1 + \varepsilon^{2-k} ), \quad 
		\| v_2^{(k)} \| \leqslant C ( 1 + \mu^{2-k} ) 
		\quad \text{for}~k = 0, \ldots, 4.
	\end{align}
	and
	\begin{equation*}
		| v_1^{(3)} (x) | \leqslant C \left( \mu^{-1} + \varepsilon^{-1} \mathbb{B}_\varepsilon(x) \right) \quad \text{for} ~ x \in (0,1). 
	\end{equation*}
	The layer component $\vec{w}$ satisfies:
	\begin{align}   \label{Eq: derivative bounds for w1 w2 for k=0 to 2}
		| w_1^{(k)} (x)  | \leqslant C ( \varepsilon^{-k} \mathbb{B}_\varepsilon(x) + \mu^{-k} \mathbb{B}_\mu(x) ), \quad
		| w_2^{(k)} (x)  | \leqslant C \mu^{-k} \mathbb{B}_\mu(x) \quad \text{for}~k=0,1,2,
	\end{align}
	and
	\begin{align}  \label{Eq: w1 w2 k=3,4}
		\varepsilon^2  | w_1^{(k)} (x) | + \mu^2  | w_2^{(k)} (x) | \leqslant
		C ( \varepsilon^{2-k} \mathbb{B}_\varepsilon(x)  + \mu^{2-k} \mathbb{B}_\mu(x) ) \quad \text{for}~k=3,4.
	\end{align}
\end{lemma}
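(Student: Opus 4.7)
The plan is to handle the smooth and layer components by two distinct techniques: an asymptotic expansion in $\varepsilon^{2}$ and $\mu^{2}$ for $\vec{v}$, and a barrier-function argument based on Lemma~\ref{L: comparison principle} for $\vec{w}$. For the smooth component I would follow the strategy of \citet{madden2003uniformly} and \citet{das2020higher}, expanding
\[
\vec{v} = \vec{v}_{0} + \varepsilon^{2}\vec{v}_{\varepsilon} + \mu^{2}\vec{v}_{\mu} + \varepsilon^{4}\vec{v}_{\varepsilon\varepsilon} + \varepsilon^{2}\mu^{2}\vec{v}_{\varepsilon\mu} + \mu^{4}\vec{v}_{\mu\mu} + \vec{R},
\]
where $\vec{v}_{0} = B^{-1}\vec{f}$ is the reduced algebraic solution and each successive $\vec{v}_{j,\cdot}$ is obtained by matching powers of $\varepsilon^{2}$ and $\mu^{2}$ in $L\vec{v} = \vec{f}$, so that it depends algebraically on second derivatives of earlier terms. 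The smoothness of $B$ and $\vec{f}$ forces each of these terms to have derivatives of order $O(1)$, uniformly in $\varepsilon, \mu$. The Robin data chosen for $\vec{v}$ in \eqref{Eq: solution decomposition smooth part} are precisely those needed so that the residual $\vec{R}$ satisfies $L\vec{R} = O(\varepsilon^{6} + \mu^{6})$ with Robin data of the same size. Corollary~\ref{L: stability result} applied to $\vec{R}$, together with Lemma~\ref{L: bounds for the solution derivatives} for its higher derivatives, then yields \eqref{Eq: v1 v2 k=0 to 4} after collecting contributions from the expansion.

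For the layer component I would construct a vector barrier of the form
\[
\vec{\Psi}(x) = C\begin{pmatrix} \mathbb{B}_{\varepsilon}(x) + \mathbb{B}_{\mu}(x) \\ \mathbb{B}_{\mu}(x) \end{pmatrix},
\]
and verify directly that $L\vec{\Psi} \geqslant 0$ componentwise on $\Omega$: using $\varepsilon^{2}\mathbb{B}_{\varepsilon}'' = \lambda^{2}\mathbb{B}_{\varepsilon}$, \eqref{P: assumption on coefficient matrix negative}--\eqref{P: assumption on coefficient matrix positive} and the inequality $\lambda^{2} < b_{11} + b_{12} \leqslant b_{11}$ that follows from $b_{12} \leqslant 0$, the first row reduces to $(b_{11} - \lambda^{2})\mathbb{B}_{\varepsilon} + (b_{11} + b_{12} - (\varepsilon/\mu)^{2}\lambda^{2})\mathbb{B}_{\mu} \geqslant 0$, and the second row is analogous with only $\mathbb{B}_{\mu}$ present, the sign hypothesis $b_{21} \leqslant 0$ together with $\varepsilon \leqslant \mu$ ensuring no $\varepsilon$-layer need appear in $w_{2}$. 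Choosing $C$ large enough to dominate the Robin data of $\vec{w}$ (which are controlled by the smooth-part estimates) and invoking Lemma~\ref{L: comparison principle} gives the $k=0$ bounds in \eqref{Eq: derivative bounds for w1 w2 for k=0 to 2}. The $k=2$ bounds come at once from solving the system for $w_{1}'', w_{2}''$, and the $k=1$ bounds follow by a mean-value argument on subintervals of length $O(\varepsilon)$ or $O(\mu)$ applied to the $k=0$ and $k=2$ estimates. Bounds \eqref{Eq: w1 w2 k=3,4} then follow by differentiating the system once or twice and substituting the $k \leqslant 2$ estimates.

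The delicate step is the pointwise estimate $|v_{1}^{(3)}(x)| \leqslant C(\mu^{-1} + \varepsilon^{-1}\mathbb{B}_{\varepsilon}(x))$: even with the expansion above in hand, differentiating $\varepsilon^{2} v_{1}'' = b_{11}v_{1} + b_{12}v_{2} - f_{1}$ and substituting the $k \leqslant 2$ smooth-part bounds yields only the weaker uniform estimate $|v_{1}'''| \leqslant C\varepsilon^{-1}$. To localise the $\varepsilon^{-1}$ blow-up to the $\varepsilon$-layer I would further split $v_{1} = V_{1} + Z_{1}$, where $V_{1}$ collects all contributions whose third derivative is of order $\mu^{-1}$ (arising from the $\mu$-scale behaviour of $v_{2}$ propagated through the coupling $b_{12}$), and $Z_{1}$ is an explicit layer-correction of the form $\varepsilon \phi(x)\mathbb{B}_{\varepsilon}(x)$ absorbing the residual near-boundary excess. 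Bounding $V_{1}^{(3)}$ then uses Corollary~\ref{L: stability result} applied to an auxiliary problem with $\mu^{-1}$-scaled source, while the estimate for $Z_{1}^{(3)}$ is immediate from direct differentiation. I expect this localisation to be the main obstacle, since it is the one place where the asymmetric hierarchy $\varepsilon \leqslant \mu$ must be exploited most delicately; the remaining estimates are fairly standard applications of the maximum and comparison principles combined with routine ODE identities.
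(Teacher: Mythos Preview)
The paper does not prove this lemma at all: its entire argument is the one-line citation ``The result has been proven in \citet{linss2004accurate}.'' Your sketch therefore goes well beyond what the authors supply, essentially reconstructing the content of that reference. The strategy you outline---asymptotic expansion in $\varepsilon^{2},\mu^{2}$ for $\vec{v}$, a vector barrier combined with ODE manipulation for $\vec{w}$---is indeed the approach taken in \citet{linss2004accurate} and \citet{madden2003uniformly}, so your route and the cited source agree in spirit.

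Two technical points in your outline deserve care. First, the barrier $\vec{\Psi} = C(\mathbb{B}_{\varepsilon}+\mathbb{B}_{\mu},\ \mathbb{B}_{\mu})^{T}$ does not give $L_{2}\vec{\Psi}\geqslant 0$ as written: the coupling term contributes $b_{21}\mathbb{B}_{\varepsilon}\leqslant 0$, and even after using $\mathbb{B}_{\varepsilon}\leqslant\mathbb{B}_{\mu}$ one is left needing $2b_{21}+b_{22}>\lambda^{2}$, which \eqref{P: assumption on coefficient matrix positive} does not guarantee. The repair is easy---either weight the $\mathbb{B}_{\mu}$ entries by a constant $C_{0}$ with $C_{0}(b_{21}+b_{22}-\lambda^{2})>|b_{21}|$, or note that for $k=0$ the stated bound is equivalent to $|w_{m}|\leqslant C\mathbb{B}_{\mu}$ (since $\mathbb{B}_{\varepsilon}\leqslant\mathbb{B}_{\mu}$) and use the simpler barrier $C(\mathbb{B}_{\mu},\mathbb{B}_{\mu})^{T}$, for which both rows of $L\vec{\Psi}\geqslant 0$ are immediate. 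Second, the sharp $k=1,2$ bounds for $w_{1}$, which separate the $\varepsilon^{-k}\mathbb{B}_{\varepsilon}$ and $\mu^{-k}\mathbb{B}_{\mu}$ scales, do \emph{not} follow by simply solving $\varepsilon^{2}w_{1}''=b_{11}w_{1}+b_{12}w_{2}$ and inserting the $k=0$ bound: that yields only $|w_{1}''|\leqslant C\varepsilon^{-2}\mathbb{B}_{\mu}$, which is too crude between the $\varepsilon$- and $\mu$-layers. One must split $w_{1}$ into $\varepsilon$- and $\mu$-scale pieces first, exactly as in Lemma~\ref{L: w derivatives}, and this is how \citet{linss2004accurate} proceeds. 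With these adjustments your plan matches the literature.
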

\begin{proof}
	~ The result has been proven in \citet{linss2004accurate}.
	\hfill
\end{proof}

	The layer term defined in \eqref{Eq: solution decomposition layer part} can be further decomposed as follows:
\begin{lemma}   \label{L: w derivatives}
	Assume that $\varepsilon \leqslant \mu$. Then, the layer component $\vec{w}$ admits two distinct decompositions: 
	\begin{equation*}
		\vec{w} = \vec{w}_\varepsilon + \vec{w}_\mu = \vec{\hat{w}}_\varepsilon + \vec{\hat{w}}_\mu,
	\end{equation*}
	where $( \vec{w}_\varepsilon, \vec{w}_\mu )$ and $( \vec{\hat{w}}_\varepsilon, \vec{\hat{w}}_\mu )$ satisfy the following derivative estimates for all $x \in [0,1]$:
	\begin{align*}
		\varepsilon^2 | w{\,}''_{\varepsilon,1}(x) | + \mu^2 | w{\,}''_{\varepsilon,2}(x) | 
		&\leqslant C{\,} \mathbb{B}_\varepsilon(x), \\
		| w^{\,(3)}_{\mu,1}(x) | + | w^{\,(3)}_{\mu,2}(x) | 
		&\leqslant C \mu^{-3} \mathbb{B}_\mu(x),   \\
		\varepsilon^2 | \hat{w}{\,}''_{\varepsilon,1}(x) | + \mu^2 | \hat{w}{\,}''_{\varepsilon,2}(x) | 
		&\leqslant C{\,} \mathbb{B}_\varepsilon(x),  \\
		\varepsilon^2 | \hat{w}^{\,(4)}_{\mu,1}(x) | + \mu^2 | \hat{w}^{\,(4)}_{\mu,2}(x) | 
		&\leqslant C \mu^{-2} \mathbb{B}_\mu(x).  
	\end{align*}
\end{lemma}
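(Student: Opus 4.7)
The plan is to follow the two-parameter decomposition technique of \citet{linss2004accurate}. I would define $\vec{w}_\mu$ (and analogously $\vec{\hat w}_\mu$) first, as the solution of a tailored auxiliary coupled boundary value problem whose structure forces the solution to carry only the $\mu$-scale exponential behaviour, and then set $\vec{w}_\varepsilon := \vec{w}-\vec{w}_\mu$ and $\vec{\hat w}_\varepsilon := \vec{w}-\vec{\hat w}_\mu$ as the residual pieces that inherit the $\varepsilon$-scale layer. Concretely, for $\vec{w}_\mu$ one solves $L\vec{w}_\mu=\vec{0}$ on $\Omega$ with Robin boundary data chosen to match exactly the $\mu$-layer contribution of $\vec{w}$ at $x=0,1$, so that the residual $\vec{w}_\varepsilon$ has boundary data containing only the $\varepsilon$-scale contribution. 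For the hat version, the auxiliary problem is modified by matching one additional order in the asymptotic expansion in $\varepsilon/\mu$ so that $\vec{\hat w}_\mu$ is one derivative smoother, which enables the claimed fourth-derivative estimate.

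With the decompositions in place, the bounds follow by applying Lemma~\ref{L: comparison principle} with barriers built from $\mathbb{B}_\mu$ and $\mathbb{B}_\varepsilon$. For the third-derivative estimate on $\vec{w}_\mu$, I would first establish $|w_{\mu,i}(x)| \leq C\mathbb{B}_\mu(x)$ by the barrier $\vec{\psi}=C\mathbb{B}_\mu(x)(1,1)^T$, checking $L\vec{\psi}\geq 0$ via \eqref{P: assumption on coefficient matrix positive} and the Robin inequalities at the endpoints. Differentiating $L\vec{w}_\mu=\vec{0}$ once and applying the comparison principle to $(w_{\mu,1}',w_{\mu,2}')^T$ against $C\mu^{-1}\mathbb{B}_\mu(x)(1,1)^T$ yields the first-derivative bound, after which the second and third derivatives are extracted algebraically from the ODE and its derivative. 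For the weighted second-derivative bound on $\vec{w}_\varepsilon$, I would subtract the ODEs for $\vec{w}$ and $\vec{w}_\mu$ to obtain the identity $\varepsilon^2 w_{\varepsilon,1}'' = b_{11}w_{\varepsilon,1}+b_{12}w_{\varepsilon,2}$ (and its analogue for the second component), and then use $|w_{\varepsilon,i}(x)|\leq C\mathbb{B}_\varepsilon(x)$, itself established by a barrier argument applied to $\vec{w}_\varepsilon$, together with $|b_{ij}|\leq\lambda^\ast$. The hat bounds are then obtained by repeating the same procedure one derivative higher, relying on the extra regularity built into $\vec{\hat w}_\mu$.

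The main obstacle is the calibration of the boundary data for the auxiliary problems so that the residuals $\vec{w}_\varepsilon$ and $\vec{\hat w}_\varepsilon$ inherit \emph{only} the $\varepsilon$-layer behaviour, that is, so that the boundary matching cancels the $\mu$-scale contributions in $\vec{w}$ to sufficient precision. This requires exploiting both the sign assumptions \eqref{P: assumption on coefficient matrix negative} and the diagonal-dominance condition \eqref{P: assumption on coefficient matrix positive} to keep the off-diagonal coupling terms $b_{12},b_{21}$ from contaminating the $\mu$-scale estimates with $\varepsilon$-scale remnants. Getting this right at the next order, so as to secure the sharp $\mu^{-2}$ prefactor in the fourth-derivative bound for the hat decomposition rather than a spurious $\mu^{-3}$ or $\varepsilon^{-1}\mu^{-2}$ blow-up, will be the most delicate point of the argument.
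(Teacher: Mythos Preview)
Your proposal is correct and aligns with the paper's own treatment: the paper gives no self-contained proof but simply cites \citet{linss2004accurate} and \citet{das2013uniformly}, and the decomposition-and-barrier strategy you outline is precisely the technique developed in those references. The only minor remark is that you need not worry excessively about calibrating Robin boundary data for the auxiliary problems, since the cited constructions actually define $\vec{w}_\mu$ via a modified operator (replacing $\varepsilon^2$ by $\mu^2$ in the first equation) rather than via boundary matching alone, which is what cleanly separates the scales.
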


\begin{proof}
	~	For the complete proof, refer to \citet{linss2004accurate} and \citet{das2013uniformly}.
	\hfill
\end{proof}

	%***************************** CUBIC SPLINE DIFFERENCE SCHEME **********************************
	
\section{Discrete problem}  \label{S: discrete problem}
To approximate the solution of problem~\eqref{P: system general}~--~\eqref{P: robin boundary conditions}, we employ a finite difference scheme (standard central difference and cubic spline based) defined on either standard Shishkin mesh ($\Omega_S$) or modified BS mesh ($\Omega_{BS}$). Let $\vec{Y} = (Y_1, Y_2)^T$ denote the mesh function defined on $\Omega^N = \{ x_i \}_0^N$ that satisfies the discrete system
\begin{equation}  \label{P: discretization system general}
	{L}^N \vec{Y}(x_i) \equiv
	\begin{pmatrix}
		L_1^N \vec{Y}(x_i)  \\
		L_2^N \vec{Y}(x_i)
	\end{pmatrix} \equiv
	\begin{pmatrix}
		-\varepsilon^2 \delta^2 & 0                       \\
		0                               & -\mu^2 \delta^2
	\end{pmatrix} \vec{Y}(x_i)
	+ B(x_i) \vec{Y}(x_i)
	= \vec{f}(x_i) \quad \mbox{for}~ i=1,\dotsc,N-1,
\end{equation}
with Robin boundary conditions
\begin{equation}
%	\begin{cases}
		\begin{aligned}  \label{P: discretization robin boundary conditions}
			R_1^{(0),N}  {Y}_1(x_0) &\equiv \alpha_1 Y_1(x_0) - {\varepsilon}\beta_1 S^+ Y_1(x_0) =   P_1, \quad
			R_1^{(1),N}  {Y}_1(x_N) \equiv \gamma_1 Y_1(x_N) + {\varepsilon}\delta_1 S^-Y_1(x_N) =   Q_1, \\
			R_2^{(0),N}  {Y}_2(x_0) &\equiv \alpha_2 Y_2(x_0) - {\mu}\beta_2 S^+Y_2(x_0) =   P_2, \quad
			R_2^{(1),N}  {Y}_2(x_N) \equiv \gamma_2 Y_2(x_N) + {\mu}\delta_2 S^-Y_2(x_N) =   Q_2,
		\end{aligned}
%	\end{cases}
\end{equation}
where $\delta^2$ is the standard second-order central differencing operator. To discretize the Robin boundary conditions \eqref{P: discretization robin boundary conditions}, we use one-sided derivatives based on the cubic spline interpolant $S(x)$ constructed as in \citet{bawa2010higher}, is detailed below.
\begin{align*}
	S^-(x_i) & = \; \dfrac{h_i}{6} M_{i-1} + \dfrac{h_i}{3} M_{i} + \dfrac{y(x_i) - y(x_{i-1})}{h_i},  \\
	S^+(x_i) & = \; \dfrac{-h_{i+1}}{3} M_{i} - \dfrac{h_{i+1}}{6} M_{i+1} + \dfrac{y(x_{i+1}) - y(x_{i})}{h_{i+1}}.  
\end{align*}
Here $h_i = x_i - x_{i-1}$ represents the step size and we impose that $M_i = y_j''(x_i)$ for $j=1,2$.  After discretization, the coefficients will be as follows:
\begin{equation}
	\begin{aligned}  \label{Eq: discretization system general}
		%	\begin{cases}
			L_1^N \vec{Y} & \equiv A_{1,i}^- Y_{1,i-1} + A_{1,i}^c Y_{1,i} + A_{1,i}^+ Y_{1,i+1} + B_{1,i}^- Y_{2,i-1} + B_{1,i}^c Y_{2,i} + B_{1,i}^+ Y_{2,i+1}, \\	
			L_2^N \vec{Y} & \equiv A_{2,i}^- Y_{2,i-1} + A_{2,i}^c Y_{2,i} + A_{2,i}^+ Y_{2,i+1} + B_{2,i}^- Y_{1,i-1} + B_{2,i}^c Y_{1,i} + B_{2,i}^+ Y_{1,i+1}, \\
			F_1^N & \equiv F_{1,i}^- f_{1,i-1} + F_{1,i}^c f_{1,i} + F_{1,i}^+ f_{1,i+1}, \\
			F_2^N & \equiv F_{2,i}^- f_{2,i-1} + F_{2,i}^c f_{2,i} + F_{2,i}^+ f_{2,i+1}.
			%	\end{cases}
	\end{aligned}
\end{equation}

\noindent The computation of the coefficients is outlined below.  For $i=0,$ the discretization takes the form:
\begin{equation}   \label{Eq: left boundary conditions - cubic spline}
	\begin{cases}
		A_{1,0}^c = \dfrac{3 \varepsilon}{h_1} \left( \alpha_1 + \dfrac{\varepsilon \beta_1}{h_1} \right) + b_{11}(x_0)\beta_1; \quad  
		A_{1,0}^+ = \dfrac{-3 \varepsilon^2 \beta_1}{h_1^2} + \dfrac{b_{11}(x_1) \beta_1}{2};  \\[4pt]
		B_{1,0}^c = \beta_1 b_{12}(x_0); \quad  
		B_{1,0}^+ =  \dfrac{\beta_1 b_{12}(x_1)}{2}; \\[4pt]
		F_{1,0}^- = \dfrac{3 \varepsilon   P_1}{h_1}; \quad  
		F_{1,0}^c = \beta_1; \quad  
		F_{1,0}^+ =  \dfrac{\beta_1}{2}; \\[4pt]                    
		A_{2,0}^c = \dfrac{3 \mu}{h_1} \left( \alpha_2 + \dfrac{\mu \beta_2}{h_1} \right) + b_{22}(x_0)\beta_2; \quad  
		A_{2,0}^+ = \dfrac{-3 \mu^2 \beta_2}{h_1^2} + \dfrac{b_{22}(x_1) \beta_2}{2}; \\[4pt]
		B_{2,0}^c = \beta_2 b_{21}(x_0); \quad  
		B_{2,0}^+ = \dfrac{\beta_2 b_{21}(x_1)}{2}; \\[4pt] 
		F_{2,0}^- = \dfrac{3 \mu   P_2}{h_1}; \quad  
		F_{2,0}^c = \beta_2; \quad  
		F_{2,0}^+ = \dfrac{\beta_2}{2}.
	\end{cases}
\end{equation}

\noindent For $i=1, \ldots, N-1,$ the discretization is as follows:   
\begin{equation}   \label{Eq: discretization central scheme}
	\begin{cases}
		A_{1,i}^- = \dfrac{- \varepsilon^2}{h_i \bar{h}_i}; \quad  
		A_{1,i}^c = \dfrac{2 \varepsilon^2}{h_i h_{i+1}} + c_{11}(x_i); \quad  
		A_{1,i}^+ = \dfrac{- \varepsilon^2}{h_{i+1} \bar{h}_i};  \\[4pt]
		B_{1,i}^- = 0; \quad  
		B_{1,i}^c = c_{12}(x_i); \quad  
		B_{1,i}^+ = 0; \\[4pt]
		F_{1,i}^- = 0; \quad  
		F_{1,i}^c = 1; \quad  
		F_{1,i}^+ = 0; \\[4pt]
		A_{2,i}^- = \dfrac{- \mu^2}{h_i \bar{h}_i}; \quad  
		A_{2,i}^c = \dfrac{2 \mu^2}{h_i h_{i+1}} + c_{22}(x_i); \quad  
		A_{2,i}^+ = \dfrac{- \mu^2}{h_{i+1} \bar{h}_i}; \\[4pt]
		B_{2,i}^- = 0; \quad  
		B_{2,i}^c = c_{21}(x_i); \quad  
		B_{2,i}^+ = 0; \\[4pt]
		F_{2,i}^- = 0; \quad  
		F_{2,i}^c = 1; \quad  
		F_{2,i}^+ = 0.
	\end{cases}
\end{equation}

\noindent For $i=N,$ the discretization is as follows: 
\begin{equation}  \label{Eq: right boundary conditions - cubic spline}
	\begin{cases} 
		A_{1,N}^- = \dfrac{-3 \varepsilon^2 \delta_1}{h_N^2} + \dfrac{b_{11}(x_{N-1}) \delta_1}{2}; \quad  
		A_{1,N}^c = \dfrac{3 \varepsilon}{h_N} \left( \gamma_1 + \dfrac{ \varepsilon \delta_1}{h_N} \right) + b_{11}(x_N)\delta_1;  \\[4pt]
		B_{1,N}^- = \dfrac{\delta_1 b_{12}(x_{N-1})}{2} ; \quad  
		B_{1,N}^c = b_{12}(x_N) \delta_1; \\[4pt]
		F_{1,N}^- = \dfrac{\delta_1}{2}; \quad
		F_{1,N}^c = \delta_1; \quad
		F_{1,N}^+ = \dfrac{3 \varepsilon   Q_1}{h_N}; \\[4pt]
		A_{2,N}^- = \dfrac{-3 \mu^2 \delta_2}{h_N^2} + \dfrac{b_{22}(x_{N-1}) \delta_2}{2}; \quad  
		A_{2,N}^c = \dfrac{3 \mu}{h_N} \left( \gamma_2 + \dfrac{\mu \delta_2}{h_N} \right) + b_{22}(x_N)\delta_2; \\[4pt]
		B_{2,N}^- = \dfrac{\delta_2 b_{21}(x_{N-1})}{2}; \quad  
		B_{2,N}^c = b_{21}(x_N)  \delta_2; \\[4pt]
		F_{2,N}^- = \dfrac{\delta_2}{2}; \quad  
		F_{2,N}^c = \delta_2; \quad
		F_{2,N}^+ = \dfrac{3 \mu   Q_2}{h_N};
	\end{cases}
\end{equation}

\noindent We now proceed to describe the meshes.

\textit{Shishkin mesh (S~--~mesh $\Omega_S$)}:
Let $N$ be a positive integer and a multiple of 2. The transition parameters $\tau_\varepsilon$ and $\tau_\mu$ are defined as
\begin{equation*}  \label{Eq: transition parameters}
	\tau_\mu = \min \left\{ \dfrac{1}{4},~ \dfrac{\sigma \mu}{\lambda} \ln N \right\} \quad \text{and} \quad \tau_\varepsilon = \min \left\{ \dfrac{1}{8},~ \dfrac{\tau_\mu}{2}, ~ \dfrac{\sigma \varepsilon}{\lambda}  \ln N \right\}.
\end{equation*}   
A piecewise-uniform mesh $\Omega_S^N = \{ x_i \}_0^N$ is constructed by dividing $[0,1]$ into five subintervals $[0, \tau_\varepsilon], [\tau_\varepsilon, \tau_\mu], [\tau_\mu, 1-\tau_\mu], [1-\tau_\mu, 1-\tau_\varepsilon]$ and $[1-\tau_\varepsilon,1].$ Then subdivide $[\tau_\mu, 1-\tau_\mu]$ into $N/2$ mesh intervals, and subdivide each of the other four subintervals into $N/8$ mesh intervals. For more details on S~--~mesh, refer \citet{madden2003uniformly}.

\vspace{1em}

\textit{Bakhvalov - Shishkin mesh (BS~--~mesh $\Omega_{BS}$)}:
We consider a modification of the Shishkin mesh that integrates an idea from \citet{bakhvalov1969optimization}, wherein mesh condenses within boundary layers by inverting the associated boundary layer terms. Here, we choose transition points similar to those of the S~--~mesh. The BS~--~mesh corresponding to the case $\varepsilon = \mu$ is detailed in \cite{linss2009layer}. We now propose a BS~--~type mesh suitable for the more general case $0 < \varepsilon \leqslant \mu \leqslant 1.$  We now make the very mild assumption that $\tau_\mu = ({\sigma \mu} / {\lambda}) \ln N \quad \text{and} \quad \tau_\varepsilon = ({\sigma \varepsilon} / {\lambda})  \ln N,$ as otherwise $ N^{-1} $ is exponentially smaller in magnitude than the parameters $\varepsilon$ and $\mu.$ In this case, we assume that $\varepsilon \leqslant \mu \leqslant N^{-1}$, which is typical in practice. The interval $[\tau_\mu, 1-\tau_\mu]$ is uniformly dissected into $N/2$ subintervals. The interval $[0, \tau_\varepsilon]$ is partitioned into $N/8$ mesh intervals by inverting the function $\exp\left( - \lambda x / (2 \varepsilon )\right).$ We specify $x_i$, for $i=0,1, \ldots, N/8,$ so that $\exp\left( - \lambda x_i / (2 \varepsilon )\right)$ is a linear function in $i$. i.e., we set 
\begin{align*}
	\exp\left( - \lambda x_i / (2 \varepsilon )\right) = \mathfrak{R}i + \mathfrak{S}
\end{align*}
and choose the unknowns $\mathfrak{R}$ and $\mathfrak{S}$ so that $x_0 = 0$ and $x_{N/8} = \tau_\varepsilon$. Similarly, the intervals $[\tau_\varepsilon, \tau_\mu], [1-\tau_\mu, 1-\tau_\varepsilon],$ and $[1-\tau_\varepsilon,1]$ are partitioned into $N/8$ mesh intervals each by inverting the functions $\exp \left( - \lambda x / (2 \mu) \right), ~ \exp \left( - \lambda (1-x) / (2 \mu) \right)$ and $\exp \left( - \lambda (1-x) / (2 \varepsilon) \right)$ respectively. This gives
\begin{align*}  \label{BS mesh points}
	x_i = \begin{cases}
		\dfrac{-2 \varepsilon}{\lambda} \ln \left( 1 - \dfrac{8i}{N} \left( 1 - N^{-\sigma/2} \right) \right), \quad i=0, \ldots, N/8    \\[8pt] 
		%*********************************************************************************************************************
		\dfrac{-2 \mu}{\lambda} \ln \left( \dfrac{8i}{N} \left( N^{-\sigma/2}-N^{-\sigma \varepsilon / (2 \mu)} \right) + \left( 2 N^{-\sigma \varepsilon / (2 \mu)} - N^{-\sigma/2} \right) \right), \quad i = N/8+1, \ldots, N/4-1   \\[8pt]
		%*********************************************************************************************************************
		\tau_\mu + \left( \dfrac{1-2 \tau_\mu}{N/2} \right) \left( i - \dfrac{N}{4} \right), \quad i = N/4, \ldots, 3N/4   \\[8pt]
		%*********************************************************************************************************************
		1 + \dfrac{2 \mu}{\lambda} \ln \left( \dfrac{8i}{N} \left( N^{-\sigma \varepsilon / (2 \mu)} - N^{-\sigma/2}  \right) + \left( 7 N^{-\sigma/2} - 6 N^{-\sigma \varepsilon / (2 \mu)}  \right) \right), \quad i=3N/4+1, \ldots, 7N/8-1 \\[8pt]
		%*********************************************************************************************************************
		1 + \dfrac{2 \varepsilon}{\lambda} \ln \left( 1 - \dfrac{8}{N} \left( 1 - N^{-\sigma/2}  \right) \left( N - i \right)\right), \quad i = 7N/8, \ldots, N.
	\end{cases}
\end{align*}

	%****************************** DISCRETE MAXIMUM PRINCIPLE ***************************************

\begin{lemma}[Discrete maximum principle]   \label{L: discrete maximum principle}
	If $ {R}^{(0),{\,}N} \vec{Y}(x_0) \geqslant 0, ~ {R}^{(1),{\,}N} \vec{Y}(x_N) \geqslant 0 $ and $L^N \vec{Y}(x_i) \geqslant 0$ for $1 \leqslant i \leqslant N-1,$  then $\vec{Y}(x_i) \geqslant 0$ for all $0 \leqslant i \leqslant N.$
\end{lemma}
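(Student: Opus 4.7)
The plan is to mirror the continuous argument of Lemma~\ref{L: maximum principle} using the same barrier $\vec{t}(x) = (2-x,\,2-x)^T$ evaluated at the mesh nodes (replacing $2$ by a larger constant $K$ if necessary so that the boundary inequalities are strict, exactly as in \eqref{boundary conditions for t}). First I would verify the three discrete positivity properties of $\vec{t}$. Since $t_1,t_2$ are affine, $\delta^2 t_j \equiv 0$, so at each interior node
\[
L_j^N \vec{t}(x_i) \;=\; \bigl(b_{jj}(x_i) + b_{j,3-j}(x_i)\bigr)(2-x_i) \;\geqslant\; \lambda^2 \;>\; 0
\]
by \eqref{P: assumption on coefficient matrix positive}. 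The same affine structure forces the spline moments $M_{i-1},M_i,M_{i+1}$ entering $S^{\pm}$ to vanish, giving $S^{+}t_j(x_0) = -1$ and $S^{-}t_j(x_N) = -1$, so $R_j^{(0),N}\vec{t}(x_0)$ and $R_j^{(1),N}\vec{t}(x_N)$ are strictly positive (with $K$ enlarged if needed), in parallel with the continuous identities used to obtain \eqref{boundary conditions for t}.

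Assuming the conclusion fails, set
\[
\eta \;=\; \max\left\{\, \max_{0\leqslant i\leqslant N}\frac{-Y_1(x_i)}{t_1(x_i)},\;\; \max_{0\leqslant i\leqslant N}\frac{-Y_2(x_i)}{t_2(x_i)} \,\right\} \;>\; 0,
\]
so that $\vec{Y} + \eta\vec{t} \geqslant \vec{0}$ at every mesh point with equality attained by some component $j$ at some index $i^{*}$. I would then split into interior/boundary cases exactly as in the three cases of Lemma~\ref{L: maximum principle}.

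In the interior case $1 \leqslant i^{*} \leqslant N-1$ (take $j=1$ without loss of generality), the row of $L_1^N$ from \eqref{Eq: discretization central scheme} has positive centre $A_{1,i}^c > 0$, non-positive neighbours $A_{1,i}^{\pm} < 0$, and non-positive coupling $B_{1,i}^c = b_{12}(x_i) \leqslant 0$ with $B_{1,i}^{\pm} = 0$. Since $(Y_1+\eta t_1)(x_{i^{*}}) = 0$ is the minimum and all other involved nodal values are non-negative, a direct expansion forces $L_1^N(\vec{Y}+\eta\vec{t})(x_{i^{*}}) \leqslant 0$, contradicting $L_1^N\vec{Y}(x_{i^{*}}) \geqslant 0$ combined with the strict $L_1^N(\eta\vec{t})(x_{i^{*}}) > 0$ from the first step. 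The boundary case $i^{*} = 0$ (and symmetrically $i^{*} = N$) is handled by expanding $R_1^{(0),N}(\vec{Y}+\eta\vec{t})(x_0)$ via \eqref{Eq: left boundary conditions - cubic spline}: the $A_{1,0}^c$ term vanishes, $B_{1,0}^c,\,B_{1,0}^{+} \leqslant 0$ since $b_{12} \leqslant 0$, and $A_{1,0}^{+} = -\tfrac{3\varepsilon^2\beta_1}{h_1^{2}} + \tfrac{b_{11}(x_1)\beta_1}{2} \leqslant 0$ provided $h_1 \leqslant \varepsilon\sqrt{6/b_{11}(x_1)}$; thus every surviving term is non-positive, contradicting the strictly positive value of $R_1^{(0),N}(\vec{Y}+\eta\vec{t})(x_0)$. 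Applying the identical reasoning to the second component completes the argument.

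The main obstacle I anticipate is precisely this sign inequality for $A_{1,0}^{+}$ (and its analogues $A_{2,0}^{+}$, $A_{1,N}^{-}$, $A_{2,N}^{-}$ at the other spline rows): unlike the central-difference rows, whose M-matrix structure is automatic, the cubic-spline Robin rows become M-matrix rows only under the mild mesh condition $h_1,h_N = O(\varepsilon)$ (respectively $O(\mu)$ for the second component). This is fulfilled on both $\Omega_S$ and $\Omega_{BS}$ for $N$ sufficiently large, since the first and last layer intervals have width $O(\varepsilon N^{-1}\ln N)$ on the Shishkin mesh and $O(\varepsilon N^{-1})$ on the modified Bakhvalov--Shishkin mesh, so the discrete maximum principle holds throughout the framework of Section~\ref{S: discrete problem}.
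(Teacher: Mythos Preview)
Your argument is essentially the paper's own route. The paper does not prove Lemma~\ref{L: discrete maximum principle} directly; it records in the remark following the statement that the discrete maximum principle follows once the stiffness matrix is an $M$-matrix, and then establishes precisely the sign conditions $A_{k,0}^{+}<0$, $A_{k,N}^{-}<0$, $A_{k,i}^{\pm}<0$, $A_{k,i}^{c}>0$, row-diagonal dominance, etc.\ in Lemma~\ref{L: m matrix} under the mesh assumptions \eqref{Eq: assumption for m-matrix proof on shishkin mesh}--\eqref{Eq: assumption for m-matrix proof on bs mesh}. Your barrier argument is simply the unfolded version of ``$M$-matrix $\Rightarrow$ maximum principle'': in the interior case you invoke exactly the sign pattern of \eqref{Eq: discretization central scheme}, and in the boundary case you isolate the condition $A_{1,0}^{+}\leqslant 0$ (equivalently $h_1\leqslant \varepsilon\sqrt{6/b_{11}(x_1)}$), which is the content of the proof of Lemma~\ref{L: m matrix}. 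So your identification of the obstacle and the mesh hypothesis needed to remove it matches the paper one-for-one.

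One point deserves correction. When you compute $R_{j}^{(0),N}\vec{t}(x_0)$ by saying ``the spline moments $M_0,M_1$ vanish because $t$ is affine'', you are treating the moments as actual second derivatives. But in the discrete operator \eqref{Eq: left boundary conditions - cubic spline} the moments have already been eliminated via the differential equation $M_i=\varepsilon^{-2}\bigl(b_{11}Y_1+b_{12}Y_2-f_1\bigr)(x_i)$; that is why the boundary row contains the coupling coefficients $B_{1,0}^{c},B_{1,0}^{+}$. For a mesh function (which $\vec{t}$ becomes once restricted to $\Omega^N$), $R_{j}^{(0),N}$ must be read as the matrix row of \eqref{Eq: left boundary conditions - cubic spline}, not as \eqref{P: discretization robin boundary conditions} with $M_i=t''(x_i)$. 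The two evaluations differ, though both happen to be positive here. In your contradiction step you correctly switch to the matrix-row reading; to keep the argument consistent, verify $R_j^{(0),N}\vec{t}(x_0)>0$ directly from \eqref{Eq: left boundary conditions - cubic spline}, or, more simply, take the constant barrier $(1,1)^T$ (as the paper itself does in the proof of Lemma~\ref{L: discrete stability result}), for which the computation is immediate.
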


\begin{remark}
	If the stiffness matrix associated with the discrete operator $L^N$, defined by the scheme \eqref{Eq: left boundary conditions - cubic spline}~--~\eqref{Eq: right boundary conditions - cubic spline}, is an $M$-matrix, then the discrete maximum principle is satisfied. The next lemma confirms that the discretization of $L^N$ indeed yields an $M$-matrix.
\end{remark}

\begin{lemma} \label{L: m matrix}
	Let $N_0$ and $N_1$ be sufficiently large positive integers such that

	\begin{equation}
		32 \lambda^* \lambda^{-2} \sigma^2 N^{-2} \ln^2 N 
		< 3 \qquad \text{for all } N \geqslant N_0 \quad \text{(S~--~mesh)},
		\label{Eq: assumption for m-matrix proof on shishkin mesh}
	\end{equation}
	\begin{equation}
		2 \lambda^* \lambda^{-2} \ln^2 \left( 1 + 8 N^{-1} ( N^{-\sigma / 2} - 1 ) \right)
		< 3 \qquad \text{for all } N \geqslant N_1 \quad \text{(BS~--~mesh)}.
		\label{Eq: assumption for m-matrix proof on bs mesh}
	\end{equation}
	
	\noindent Then, for all $i=0,\ldots,N,$ the coefficients of the discretized system satisfy:
	\begin{align*}
		A_{1,i}^- < 0, \quad A_{1,i}^+ < 0, \quad A_{1,i}^c > 0, \quad |A_{1,i}^c| > |A_{1,i}^-|+|A_{1,i}^+| , \\
		A_{2,i}^- < 0, \quad A_{2,i}^+ < 0, \quad A_{2,i}^c > 0, \quad |A_{2,i}^c| > |A_{2,i}^-|+|A_{2,i}^+|.
	\end{align*}
	As a consequence, the stiffness matrix arising from the numerical scheme \eqref{Eq: left boundary conditions - cubic spline}~--~\eqref{Eq: right boundary conditions - cubic spline}, applied to the system \eqref{P: system general}~--~\eqref{P: robin boundary conditions}, satisfies the discrete maximum principle. Moreover, the scheme is uniformly stable with respect to the perturbation parameters in the maximum norm.
\end{lemma}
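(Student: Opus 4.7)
The plan is a case-by-case verification of the three sign conditions and the diagonal-dominance inequality for every row of the stiffness matrix: treat interior rows with \eqref{Eq: discretization central scheme} and the two boundary rows with \eqref{Eq: left boundary conditions - cubic spline} and \eqref{Eq: right boundary conditions - cubic spline}, then repeat the whole argument for the second component. Once the four stated inequalities are in hand, the non-positive off-diagonal coupling entries $B^\pm_{j,i},B^c_{j,i}$ (which are non-positive because $b_{12},b_{21}\leqslant 0$ by \eqref{P: assumption on coefficient matrix negative}), together with the strict dominance $b_{jj}>|b_{jk}|$ implied by \eqref{P: assumption on coefficient matrix positive}, will absorb the cross-row coupling and produce the $M$-matrix property of the full stiffness matrix, hence the discrete maximum principle of Lemma~\ref{L: discrete maximum principle} and uniform stability in the maximum norm.

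For the interior rows the verification is immediate. The signs of $A^\pm_{1,i}$ and $A^c_{1,i}$ are read off \eqref{Eq: discretization central scheme}, noting that $b_{11}>\lambda^2+|b_{12}|>0$ follows from \eqref{P: assumption on coefficient matrix positive}. Using $\bar h_i=(h_i+h_{i+1})/2$, the algebraic identity $1/(h_i\bar h_i)+1/(h_{i+1}\bar h_i)=2/(h_i h_{i+1})$ gives $|A^c_{1,i}|-|A^-_{1,i}|-|A^+_{1,i}|=b_{11}(x_i)>0$. The row for $Y_2$ is handled identically with $\mu$ in place of $\varepsilon$.

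The real work is at the two Robin rows. Focus on $i=0$. Positivity of $A^c_{1,0}$ is automatic: every summand in \eqref{Eq: left boundary conditions - cubic spline} is non-negative and $\alpha_1+\beta_1>0$ forces at least one to be strictly positive. Negativity of $A^+_{1,0}$ is the only sign condition that is not a matter of inspection: in the non-trivial case $\beta_1>0$ it reduces to the mesh inequality $h_1^{2}\,b_{11}(x_1)<6\varepsilon^{2}$. On the S-mesh the first spacing is $h_1=8\sigma\varepsilon\ln N/(\lambda N)$, so after bounding $b_{11}\leqslant\lambda^\ast$ this inequality is equivalent to $32\lambda^\ast\lambda^{-2}\sigma^{2}N^{-2}\ln^{2}N<3$, which is precisely \eqref{Eq: assumption for m-matrix proof on shishkin mesh}. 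On the BS-mesh the first spacing is $h_1=-(2\varepsilon/\lambda)\ln(1+8N^{-1}(N^{-\sigma/2}-1))$, and the same reduction converts the required bound into \eqref{Eq: assumption for m-matrix proof on bs mesh}. Once $A^+_{1,0}<0$ is secured, the diagonal-dominance inequality is automatic because the $3\varepsilon^{2}\beta_1/h_1^{2}$ terms cancel in $|A^c_{1,0}|-|A^+_{1,0}|$, leaving $3\varepsilon\alpha_1/h_1+\beta_1 b_{11}(x_0)+\tfrac12\beta_1 b_{11}(x_1)>0$. Row $i=N$ is handled by the mirror argument with $(\gamma_1,\delta_1,h_N)$ replacing $(\alpha_1,\beta_1,h_1)$, using that by construction $h_N$ obeys the same upper bounds as $h_1$; the second component is identical under $(\varepsilon,b_{11},b_{12})\mapsto(\mu,b_{22},b_{21})$.

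The main obstacle is exactly the negativity of $A^+_{1,0}$ and the symmetric claim $A^-_{1,N}<0$: these are the only coefficients whose sign is not forced by the formulas themselves, and their control is what the two hypotheses \eqref{Eq: assumption for m-matrix proof on shishkin mesh}--\eqref{Eq: assumption for m-matrix proof on bs mesh} have been crafted to deliver. Everything else reduces to algebraic tidying, and the passage from the stated $A$-coefficient inequalities to the full $M$-matrix conclusion is the standard consequence of combining non-positive off-diagonals with the block-wise strict dominance $b_{jj}>|b_{jk}|$.
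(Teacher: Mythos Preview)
Your proposal is correct and follows essentially the same approach as the paper's proof: a case-by-case verification in which the interior rows are handled by direct inspection of \eqref{Eq: discretization central scheme}, while the only non-trivial sign condition $A^+_{1,0}<0$ (and its mirror $A^-_{1,N}<0$) is reduced via the explicit formula for $h_1$ on each mesh to precisely the hypotheses \eqref{Eq: assumption for m-matrix proof on shishkin mesh}--\eqref{Eq: assumption for m-matrix proof on bs mesh}. You actually spell out more than the paper does---in particular, the role of the coupling blocks $B^\pm_{j,i},B^c_{j,i}$ and the dominance $b_{jj}>|b_{jk}|$ in passing from the $A$-coefficient inequalities to the full $M$-matrix property---but the core argument is identical.
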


\begin{proof}
	~ From \eqref{Eq: left boundary conditions - cubic spline}~--~\eqref{Eq: right boundary conditions - cubic spline}, it is straightforward to verify that $A_{1,i}^c> 0 $ for all $i$.
	Using the assumption in \eqref{Eq: assumption for m-matrix proof on shishkin mesh}, we first consider the S-mesh, for which $h_1 = 8 \sigma \varepsilon \lambda^{-1} N^{-1} \ln N.$ Then, the coefficient $A_{1,0}^+$ satisfies
	\begin{equation*}
		A_{1,0}^+ = \dfrac{-3 \varepsilon^2 \beta_1}{h_1^2} + \dfrac{c_{11}(x_1) \beta_1}{2}
		< \dfrac{\beta_1 (-3 + 32 \lambda^* \lambda^{-2} \sigma^2 N^{-2} \ln^2 N)}{64 \lambda^{-2} \sigma^2 N^{-2} \ln^2 N } < 0
	\end{equation*}
	for all $N \geqslant N_0.$ In the case of BS mesh, where $h_1 = -2 \varepsilon \lambda^{-1} \ln \left( 1 + 8 N^{-1} (N^{-\sigma/2} - 1) \right)$, a similar argument using \eqref{Eq: assumption for m-matrix proof on bs mesh} yields
	\begin{equation*}
		A_{1,0}^+ 
		< \frac{\beta_1 \left( -3 + 2 \lambda^* \lambda^{-2} \ln^2 \left( 1 + 8 N^{-1} (N^{-\sigma/2} - 1) \right) \right)}{4 \lambda^{-2} \ln^2 \left( 1 + 8 N^{-1} (N^{-\sigma/2} - 1) \right)} < 0,
	\end{equation*}
	for all \( N \geqslant N_1 \). Furthermore, the difference \( |A_{1,0}^c| - |A_{1,0}^+| \) satisfies
	\begin{align*}
		|A_{1,0}^c| - |A_{1,0}^+| 
		&= \frac{3 \varepsilon}{h_1} \left( \alpha_1 + \frac{\varepsilon \beta_1}{h_1} \right) + c_{11}(x_0) \beta_1 
		- \frac{3 \varepsilon^2 \beta_1}{h_1^2} + \frac{c_{11}(x_1) \beta_1}{2} \\
		&> \frac{3 \varepsilon \alpha_1}{h_1} + \frac{3 \lambda^* \beta_1}{2} > 0.
	\end{align*}
	A similar argument can be applied to show that $A_{1,N}^- < 0$, $A_{1,N}^c > 0$ and \mbox{$|A_{1,N}^c|-|A_{1,N}^-|>0 $.} From the discretization scheme in \eqref{Eq: discretization central scheme}, it follows that for  $i = 1, \ldots, N-1,$ the coefficients satisfy
	\begin{equation*}
		A_{1,i}^- < 0, \quad A_{1,i}^+ < 0, \quad A_{1,i}^c > 0, \quad |A_{1,i}^c|-|A_{1,i}^-|-|A_{1,i}^+| > 0,
	\end{equation*}
	on both the Shishkin and BS meshes. By a similar argument, we can show that for all $i$,
	\begin{equation*}
		A_{2,i}^- < 0, \quad A_{2,i}^+ < 0, \quad A_{2,i}^c > 0, \quad |A_{2,i}^c|-|A_{2,i}^-|-|A_{2,i}^+| > 0,
	\end{equation*}
	under both mesh types. Hence, the discrete operator \eqref{P: discretization system general}~--~\eqref{P: discretization robin boundary conditions} is parameter-uniform stable.
	\hfill
\end{proof}

%***************************** DISCRETE STABILITY RESULT ***************************************8

\begin{lemma}[Discrete stability result]   \label{L: discrete stability result}
	If $\vec{Y}$ is any mesh function satisfying \eqref{P: discretization system general}~--~\eqref{P: discretization robin boundary conditions}, then
	\[
	\| \vec{Y}(x_i) \| \leqslant C \max \left\{
	\|  {  R}^{(0),{\,}N} \vec{Y}(0)  \|, ~ \|  {  R}^{(1),{\,}N} \vec{Y}(1)  \|, ~  \dfrac{1}{\lambda^2} \|   L^N \vec{Y}  \| 
	\right\}, ~ 0 \le i \le N
	\]	
\end{lemma}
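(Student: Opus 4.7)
The plan is to transpose the continuous argument of Corollary~\ref{L: stability result} to the discrete setting, using the discrete maximum principle of Lemma~\ref{L: discrete maximum principle} (guaranteed by the $M$-matrix property established in Lemma~\ref{L: m matrix}) in place of its continuous counterpart. The scaffolding function is the linear mesh function $\vec{t}(x_i)=(K_0-x_i,K_0-x_i)^{T}$, where $K_0$ is a sufficiently large fixed constant independent of $N,\varepsilon,\mu$. Writing
\[
M := \max\!\left\{\,\|R^{(0),N}\vec{Y}(x_0)\|,\;\|R^{(1),N}\vec{Y}(x_N)\|,\;\lambda^{-2}\,\|L^N\vec{Y}\,\|\,\right\},
\]
I would introduce the pair of discrete barriers
\[
\vec{\Psi}^{\pm}(x_i) \;:=\; CM\,\vec{t}(x_i)\,\pm\,\vec{Y}(x_i),\qquad 0\leqslant i\leqslant N,
\]
with $C>0$ a mesh-independent constant to be fixed below, and verify the three hypotheses of Lemma~\ref{L: discrete maximum principle} for $\vec{\Psi}^{\pm}$.

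Because $\vec{t}$ is linear, its standard central difference $\delta^{2}\vec{t}$ vanishes, so the spline moments $M_i$ entering the one-sided derivatives $S^{\pm}$ are identically zero, and the formulas of Section~\ref{S: discrete problem} collapse to the exact slopes $S^{+}t_{j}(x_0)=-1=S^{-}t_{j}(x_N)$. This gives the clean boundary identities
\[
R_j^{(0),N}\vec{t}(x_0)=K_0\alpha_j+\varepsilon\beta_j,\qquad R_j^{(1),N}\vec{t}(x_N)=\gamma_j(K_0-1)-\varepsilon\delta_j,
\]
and, on choosing $K_0>1+\max_{j}(\delta_j/\gamma_j)$, each is bounded below by a positive constant independent of $\varepsilon,\mu,N$. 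On interior mesh points, the vanishing of $\delta^{2}\vec{t}$ together with \eqref{P: assumption on coefficient matrix positive} gives $L_j^N\vec{t}(x_i)=(b_{j1}+b_{j2})(x_i)(K_0-x_i)\geqslant(K_0-1)\lambda^{2}$. Combining these three lower bounds with $\|R^{(k),N}\vec{Y}\|\leqslant M$ and $\|L^N\vec{Y}\,\|\leqslant\lambda^{2}M$, the constant $C$ can be chosen large enough (depending only on $K_0$, the boundary data constants and $\lambda$) to force $R^{(0),N}\vec{\Psi}^{\pm}(x_0)\geqslant 0$, $R^{(1),N}\vec{\Psi}^{\pm}(x_N)\geqslant 0$ and $L^N\vec{\Psi}^{\pm}(x_i)\geqslant 0$ simultaneously. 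Lemma~\ref{L: discrete maximum principle} then yields $\vec{\Psi}^{\pm}(x_i)\geqslant\vec{0}$ on $\Omega^N$, i.e., $\|\vec{Y}(x_i)\|\leqslant CK_0 M$, which is precisely the claimed stability estimate.

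\emph{The main obstacle} is the explicit verification that the cubic-spline-based Robin discretizations \eqref{Eq: left boundary conditions - cubic spline}--\eqref{Eq: right boundary conditions - cubic spline} applied to the linear barrier $\vec{t}$ produce lower bounds uniform in $\varepsilon,\mu$ and $N$. This hinges on the observation that the spline moments of a linear mesh function vanish so that the one-sided spline derivatives act as elementary divided differences; once this is checked, the remainder of the argument is a routine imitation of the continuous stability proof.
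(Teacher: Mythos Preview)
Your proposal is correct and follows the same template as the paper: construct a barrier, verify the hypotheses of the discrete maximum principle (Lemma~\ref{L: discrete maximum principle}), and conclude. The only difference is cosmetic: the paper uses the constant barrier $(1,1)^{T}$ (mirroring its own proof of Corollary~\ref{L: stability result}), whereas you use the linear barrier $(K_0-x_i,K_0-x_i)^{T}$ (mirroring the proof of Lemma~\ref{L: maximum principle}). Either choice works.

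One remark on your boundary calculation. The discrete Robin operators $R^{(k),N}$ that enter Lemma~\ref{L: discrete maximum principle} are, concretely, the first and last rows of the stiffness matrix, i.e.\ the explicit coefficient sets \eqref{Eq: left boundary conditions - cubic spline} and \eqref{Eq: right boundary conditions - cubic spline}. In deriving those rows the spline moments $M_i$ were already eliminated via the differential equation, so the rows involve the $b_{ij}$; they are not simply $\alpha_j Z_j(x_0)-\varepsilon\beta_j\,(\text{divided difference})$. Thus your identity $R_j^{(0),N}\vec t(x_0)=K_0\alpha_j+\varepsilon\beta_j$ is not literally what the matrix row returns. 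Fortunately this does no harm: a direct evaluation of \eqref{Eq: left boundary conditions - cubic spline} on $(K_0-x_i,K_0-x_i)^{T}$ (or on $(1,1)^{T}$, as in the paper) gives an even larger positive quantity, because the extra terms combine to $\beta_j\big[(b_{j1}+b_{j2})(x_0)+\tfrac12(b_{j1}+b_{j2})(x_1)\big]K_0 + O(h_1)$, which is bounded below by $\tfrac32\beta_j\lambda^{2}K_0$ thanks to \eqref{P: assumption on coefficient matrix positive}. So the barrier inequalities hold for the reasons you state, just with slightly different constants; the ``vanishing moments'' heuristic is unnecessary once one works directly with the explicit rows.
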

\begin{proof}
	Define
	\begin{equation*}
		\vec{\psi}^\pm(x_i) = \max \left \{  \| {  R}^{(0),N} \vec{Y}(0)  \|, ~ \| {  R}^{(1),N} \vec{Y}(1) \|, ~	\dfrac{1}{\gamma^2}  \| L \vec{Y} {\,}  \| \right\} \begin{pmatrix} 1 \\ 1
		\end{pmatrix} \pm \vec{Y}(x_i), ~ \text{on} ~ \Bar{\Omega}^N.
	\end{equation*}
	It is easy to verify that ${  R}^{(0),{\,}N} \vec{\psi}^\pm(0) \geqslant 0, ~ 
	{  R}^{(1),{\,}N} \vec{\psi}^\pm(1) \geqslant 0$ and $L^N \psi^\pm(x_i) \geqslant 0$ on $\Omega^N.$ Using Lemma~ \ref{L: maximum principle}, it easily follows that $\vec{\psi}^\pm(x_i) \geqslant \vec{0}$ on $\Bar{\Omega}^N$ and the required result follows.
	\hfill
\end{proof}

%*************************************** DISCRETIZED SOLUTION DECOMPOSITION ***********************

The solutions $\vec{Y}$ of the discrete problem are decomposed in a similar manner to the decomposition of the solution $
\vec{y}.$ Thus,
\begin{equation*}   \label{Eq: solution decomposition discretized}
	\vec{Y} = \vec{V} + \vec{W},
\end{equation*}
where $\vec{V}=(V_1,V_2)^T$ is the solution of the inhomogeneous problem
\begin{equation}   \label{Eq: discretized solution decomposition smooth part}
	L^N \vec{V} = \vec{f} ~  \text{on} ~ \Omega^N, \quad
	{R}^{(0),{\,}N} \vec{V}(0) = B(0)^{-1} \vec{f}(0), \quad {  R}^{(1),{\,}N} \vec{V}(1) = B(1)^{-1} \vec{f}(1),
\end{equation}
and $\vec{W}=(W_1,W_2)^T$ is the solution of the homogeneous problem 
\begin{equation}   \label{Eq: discretized solution decomposition layer part}
	{L}^N \vec{W} = \vec{0} ~ \text{on} ~ \Omega^N, \quad {R}^{(0),{\,}N} \vec{W}(0) =   P -{R}^{(0),{\,}N} \vec{V}(0), \quad {R}^{(1),{\,}N} \vec{W}(1) =   Q -{R}^{(1),{\,}N} \vec{V}(1).
\end{equation}
The next section deals with the error estimates related to the discretized smooth and layer components.

\section{Error analysis} \label{S: error analysis}
In this section, we examine the truncation error and the stability of the proposed numerical scheme. These results form the basis for proving convergence. We conclude the section by stating the main result on parameter-uniform convergence. For $i = 1,\dotsc, N-1$, the truncation errors are given by
\begin{align}   
	{T}_{y_1,i} &= - \varepsilon^2 \left( \delta^2 - \dfrac{d^2}{dx^2} \right) y_1(x_i),  \label{truncation error central scheme y1}  \\
	{T}_{y_2,i} &= - \mu^2 \left( \delta^2 - \dfrac{d^2}{dx^2} \right) y_2(x_i).  \notag
\end{align}
To analyse the truncation error for $x_0 =0,$ we see that
\begin{equation*}  \label{truncation error x0}
	{T}_{y_1,0} = A_{1,0}^c y_{1,0} + A_{1,0}^+ y_{1,1} + B_{1,0}^c y_{2,0} +  B_{1,0}^+ y_{2,1} - F_{1,0}^- - F_{1,0}^c f_{1,0} - F_{1,0}^+ f_{1,1}.
\end{equation*} 
Expanding $y_1$ at $x_0$ using Taylor series and the system \eqref{P: system general}~--~\eqref{P: robin boundary conditions}, the truncation error takes the form
\begin{align*}   \label{truncation error x0 taylor series expansion}
	{T}_{y_1,0} = T_{0,0}y_1(x_0) + T_{1,0}y_1'(x_0) +  T_{2,0}y_1''(x_0) + T_{3,0}y_1'''(x_0) + T_{4,0}y_1^{(4)}(\xi),
\end{align*}
where $\xi \in (0,1)$ and
\begin{align*}
	T_{0,0} &= A_{1,0}^c + A_{1,0}^+ - \dfrac{3 \varepsilon \alpha_1}{h_1} - F_{1,0}^c c_{11}(x_0)  - F_{1,0}^+ c_{11}(x_1), \\
	T_{1,0} &= h_1 A_{1,0}^+ +  \dfrac{3 \varepsilon^2 \beta_1}{h_1} - F_{1,0}^+  c_{11}(x_1) h_1,  \\
	T_{2,0} &= \dfrac{h_1^2 A_{1,0}^+}{2} + \varepsilon^2 \left( F_{1,0}^c + F_{1,0}^+ \right) - \dfrac{h_1^2 F_{1,0}^+ c_{11}(x_1)}{2},  \\
	T_{3,0} &= \dfrac{h_1^3 A_{1,0}^+}{3!} + \varepsilon^2 h_1 F_{1,0}^+ - \dfrac{F_{1,0}^+ c_{11}(x_1) h_1^3}{3!}, \\
	T_{4,0} &= \dfrac{h_1^4 A_{1,0}^+}{4!} + \dfrac{\varepsilon^2 h_1^2 F_{1,0}^+}{2}  - \dfrac{F_{1,0}^+ c_{11}(x_1) h_1^4}{4!}.
\end{align*}

\noindent From these expressions, we obtain the following conditions: 
\begin{align*}
	T_{0,0} = 0, \quad T_{1,0} = 0, \quad
	T_{2,0} = 0, \quad T_{3,0} = 0, \quad
	T_{4,0} = \dfrac{1}{8} \varepsilon^2 \beta_1 h_1^2.
\end{align*}
Therefore, the truncation error for $y_1$ at $x_0$ is
\begin{equation}   \label{Eq: y1 at 0 truncation}
	|  {T}_{y_1,0} |  \leqslant C \varepsilon^2 \beta_1 h_1^2 | y_{1,0}^{(4)} |_{(x_0, x_1)}.
\end{equation}
Similarly, the truncation error at $x_N$ for $y_1$ satisfies
\begin{equation}  \label{Eq: y1 at N truncation}
	|  {T}_{y_1,N} |   \leqslant C \varepsilon^2 \delta_1 h_N^2 | y_{1,N}^{(4)} |_{(x_{N-1}, x_N)}.
\end{equation}
Following the same approach used in the analysis of $y_1$, we obtain
\begin{align*}  \label{Eq: y2 at 0 and N truncation}
	|  {T}_{y_2,0} |  \leqslant C \mu^2 \beta_2 h_1^2 | y_{2,0}^{(4)} |_{(x_0, x_1)}, \qquad
	|  {T}_{y_2,N} |  \leqslant C \mu^2 \delta_2 h_N^2 | y_{2,N}^{(4)} |_{(x_{N-1}, x_N)}. 
\end{align*}

	%*********************** LEMMA MESH SIZE OF BAKHVALOV - SHISHKIN MESH ************************
	
\noindent The following lemma gives some estimates of the mesh sizes that will be used later.
\begin{lemma}     \label{L: mesh size of BS mesh}
	The step sizes of the Bakhvalov--Shishkin mesh $\Omega_{BS}^N$ satisfy
	\begin{equation*}
		h_i \leqslant C N^{-1} \quad \text{for all  }  i=1,2,\dotsc N.
	\end{equation*}
\end{lemma}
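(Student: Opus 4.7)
The plan is to bound $h_i$ separately on each of the five subintervals that constitute $\Omega_{BS}^N$. The middle interval $[\tau_\mu, 1-\tau_\mu]$ is uniformly partitioned into $N/2$ subintervals, so trivially $h_i = 2(1-2\tau_\mu)/N \leqslant 2N^{-1}$ there, and no further work is needed.

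For each of the four boundary-layer regions I would view the mesh points as the image of a uniform parameter grid under a smooth generating function $\phi$. On $[0,\tau_\varepsilon]$, set $\phi(s) := -(2\varepsilon/\lambda)\ln(1 - s(1-N^{-\sigma/2}))$ for $s\in[0,1]$, so that $x_i = \phi(8i/N)$ for $i=0,\ldots,N/8$. The mean value theorem gives $h_i = \phi(8i/N) - \phi(8(i-1)/N) \leqslant (8/N)\max_{s\in[0,1]}|\phi'(s)|$. A direct computation shows $|\phi'(s)| = (2\varepsilon/\lambda)(1-N^{-\sigma/2})/(1-s(1-N^{-\sigma/2}))$ is increasing on $[0,1]$ and attains its maximum $(2\varepsilon/\lambda)(N^{\sigma/2}-1)$ at $s=1$. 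Combined with the standing assumption $\varepsilon\leqslant\mu\leqslant N^{-1}$ invoked when defining the BS--mesh, this produces $h_i \leqslant C\varepsilon N^{\sigma/2-1} \leqslant CN^{-1}$ in the intended parameter range.

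The middle layer slab $[\tau_\varepsilon,\tau_\mu]$ requires slightly more care because its generating function mixes both parameters: $\phi(s) = -(2\mu/\lambda)\ln\bigl(s(N^{-\sigma/2}-N^{-\sigma\varepsilon/(2\mu)}) + (2N^{-\sigma\varepsilon/(2\mu)} - N^{-\sigma/2})\bigr)$ for $s\in[1,2]$. Since $\varepsilon\leqslant\mu$ gives $N^{-\sigma\varepsilon/(2\mu)}\geqslant N^{-\sigma/2}$, the argument of the logarithm is decreasing and positive on $[1,2]$, so $|\phi'|$ is again increasing and peaks at $s=2$, where it equals $(2\mu/\lambda)(N^{\sigma/2-\sigma\varepsilon/(2\mu)}-1) \leqslant (2\mu/\lambda)N^{\sigma/2}$. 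The MVT then yields $h_i \leqslant C\mu N^{\sigma/2-1} \leqslant CN^{-1}$ using $\mu\leqslant N^{-1}$. The two right-hand layer intervals $[1-\tau_\mu,1-\tau_\varepsilon]$ and $[1-\tau_\varepsilon,1]$ are handled verbatim via the reflection $x\mapsto 1-x$.

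The main obstacle is absorbing the factor $N^{\sigma/2}$ (and its analogue $N^{\sigma/2-\sigma\varepsilon/(2\mu)}$ arising from the middle slab) which appears whenever the logarithm's argument approaches its minimum at a transition point. It is precisely the hypothesis $\varepsilon,\mu\leqslant N^{-1}$ that lets this factor be absorbed into $C$; carefully tracking that the resulting constant stays independent of $\varepsilon$, $\mu$ and $N$ across the case analysis is the most delicate bookkeeping. Taking the maximum of the five regional bounds delivers the claimed estimate $h_i \leqslant CN^{-1}$ for every $i=1,\ldots,N$.
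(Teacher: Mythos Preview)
Your mean-value-theorem argument on the generating function $\phi$ is correct and takes a genuinely different route from the paper. One point you should make explicit: the step ``$C\varepsilon N^{\sigma/2-1}\leq CN^{-1}$'' (and its $\mu$-analogue) follows from $\varepsilon,\mu\leq N^{-1}$ only when $\sigma\leq 2$; since the paper's second-order error analysis implicitly takes $\sigma=2$, this is consistent, but your phrase ``in the intended parameter range'' should be replaced by the precise constraint. The paper instead argues via an index-wise estimate of the form $h_i\leq C\varepsilon/i$ on $[0,\tau_\varepsilon]$ (and $h_i\leq C\mu/i$ on $[\tau_\varepsilon,\tau_\mu]$), in the style of Lin\ss~(1999). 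As written that direction is off---the widths on $[0,\tau_\varepsilon]$ \emph{increase} with $i$, so a correct index bound would involve $N/8-i$ rather than $i$---and once repaired both approaches collapse to the same worst case $h_{N/8}\leq C\varepsilon N^{\sigma/2-1}$. Your MVT route is more direct and sidesteps these indexing pitfalls, at the price of not revealing the finer pointwise grading $h_i\leq C\varepsilon/(N/8-i+1)$ that a careful index argument would expose.
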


\begin{proof}
	 We estimate the mesh widths $h_i = x_i - x_{i-1} $ for various ranges of $i.$ We adapt the argument from \citet{linss1999upwind} to the present setting.
	
	\noindent \textit{Case 1.} Let $i=1,\dotsc,N/8.$ In this region, the mesh widths satisfy
	\begin{equation}  \label{Eq: hi for case 1}
		h_i \leqslant \dfrac{2 \varepsilon}{\lambda} \ln \left( \dfrac{i-1}{i} \right).
	\end{equation}
	To estimate this expression, we use the inequality $i \exp(2/i) \geqslant i \left( 1 + 2/i \right) \geqslant i-1,$ which implies
	\begin{equation*}
		\ln \left( \dfrac{i-1}{i} \right) \leqslant \dfrac{2}{i}.
	\end{equation*}
	Substituting this bound into \eqref{Eq: hi for case 1}, we obtain $h_i \leqslant 4 \varepsilon / \lambda i.$ Since $\varepsilon \leqslant N^{-1}$ and $i \geqslant 1,$ it follows that $h_i \leqslant C N^{-1}$ in this region.
	
	\noindent \textit{Case 2.} For $i = N/8 + 1, \dotsc, N/4-1,$ 
	\begin{equation*}  \label{Eq: hi for case 2}
		h_i \leqslant \dfrac{2 \mu}{\lambda} \ln \left( \dfrac{i-1}{i} \right) \leqslant \dfrac{4 \mu}{\lambda i} \leqslant C N^{-1},
	\end{equation*}
	using $\varepsilon \leqslant \mu \leqslant N^{-1}. $
	
	\noindent \textit{Case 3.} For $i = N/4, \dotsc, 3N/4,$ the step size is uniform:
	\begin{equation*}
		h_i = \dfrac{1-2 \tau_\mu}{N/2} \leqslant 2 N^{-1}.
	\end{equation*}
	
	\noindent \textit{Case 4.} Consider the interval $i = 3N/4 + 1, \dotsc, 7N/8-1.$ Then, the step sizes satisfy
	\begin{equation*}  \label{Eq: hi for case 4}
		h_i \leqslant \dfrac{2 \mu}{\lambda} \ln \left( \dfrac{i}{i-1} \right).
	\end{equation*}
	Since the inequality $(i - 1)\exp(2/i) \geqslant i$ implies $\ln\left( \frac{i}{i - 1} \right) \leqslant \frac{2}{i}$, it follows that
	\begin{equation*}
		h_i \leqslant \dfrac{4 \mu}{\lambda i} \leqslant C N^{-1}, \quad \text{using } \varepsilon \leqslant \mu \leqslant N^{-1}.
	\end{equation*}
	
	\noindent \textit{Case 5.} For $i = 7N/8, \dotsc, N,$
	\begin{equation*}
		h_i \leqslant \dfrac{4 \varepsilon}{\lambda i} \leqslant C N^{-1}.
	\end{equation*}
	In all cases, we conclude that $h_i \leqslant C N^{-1}$  and the proof is complete.
	\hfill
\end{proof}

	%************************************ ERROR ESTIMATES FOR V **************************************

\noindent The following results give error estimates on the regular and layer components separately, on both S-mesh and BS-mesh.
\begin{lemma} \label{L: truncation error for v}
	The smooth components $\vec{v}$ and $\vec{V}$ from \eqref{Eq: solution decomposition smooth part} and \eqref{Eq: discretized solution decomposition smooth part} respectively satisfy the following error bound on S~--~mesh or BS~--~mesh:
	\begin{equation*}
		\|L^N(\vec{V} - \vec{v}) \|_{\Omega^N} \leqslant C N^{-2},
	\end{equation*}
	where $C$ is a generic constant.
\end{lemma}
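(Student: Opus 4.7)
The plan is to reduce the estimate to a pointwise truncation analysis for the smooth component $\vec{v}$. Because $L^{N}\vec{V} = \vec{f}$ on the interior while $L^{N}\vec{V}(\partial\Omega^{N}) = B(\partial)^{-1}\vec{f}(\partial)$ by \eqref{Eq: discretized solution decomposition smooth part}, and $\vec{v}$ satisfies exactly the same continuous equations and Robin data via \eqref{Eq: solution decomposition smooth part}, one obtains at every mesh point
\[
    L^{N}(\vec{V} - \vec{v})(x_{i}) \;=\; L^{N}\vec{v}(x_{i}) - L\vec{v}(x_{i}) \;=\; T_{\vec{v},i}, \qquad i = 0,\ldots,N,
\]
so it suffices to show $|T_{\vec{v},i}| \leqslant CN^{-2}$ at every node.

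For the interior rows $1 \leqslant i \leqslant N-1$ the scheme reduces to the standard central difference, and a Taylor expansion on the non-uniform stencil gives $|T_{v_{1},i}| \leqslant C\varepsilon^{2}(h_{i}+h_{i+1})\|v_{1}'''\|_{[x_{i-1},x_{i+1}]}$. Combining this with $\|v_{1}^{(3)}\| \leqslant C(1+\varepsilon^{-1})$ from \eqref{Eq: v1 v2 k=0 to 4} yields $|T_{v_{1},i}| \leqslant C\varepsilon(h_{i}+h_{i+1})$. Lemma~\ref{L: mesh size of BS mesh} (and the elementary analogue for the S-mesh) supplies $h_{i}+h_{i+1} \leqslant CN^{-1}$, and the working assumption $\varepsilon \leqslant \mu \leqslant N^{-1}$ imposed in the construction of both meshes then gives $|T_{v_{1},i}| \leqslant CN^{-2}$. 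The symmetric argument for $v_{2}$, using $\mu^{2}\|v_{2}'''\| \leqslant C\mu$ and $\mu \leqslant N^{-1}$, yields $|T_{v_{2},i}| \leqslant CN^{-2}$.

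For the boundary rows $i = 0, N$ the cubic-spline truncation identities \eqref{Eq: y1 at 0 truncation} and \eqref{Eq: y1 at N truncation} (and the analogous $v_{2}$-expressions already collected in the text) apply verbatim to $\vec{v}$. Inserting the fourth-derivative bounds $\|v_{1}^{(4)}\| \leqslant C(1+\varepsilon^{-2})$ and $\|v_{2}^{(4)}\| \leqslant C(1+\mu^{-2})$ from \eqref{Eq: v1 v2 k=0 to 4}, together with $h_{1}, h_{N} \leqslant CN^{-1}$, produces $|T_{v_{j},0}|, |T_{v_{j},N}| \leqslant Ch_{1}^{2} \leqslant CN^{-2}$ for $j=1,2$. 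Combining the interior and boundary bounds yields the desired estimate.

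The main delicacy is the interior bound at the mesh-transition nodes, where $(h_{i+1}-h_{i})$ does not vanish and would spoil a pure $h^{2}\|v^{(4)}\|$ argument. The remedy built into the calculation above is to keep the truncation in the $\|v'''\|$ form: the factor $\varepsilon^{2}\|v_{1}'''\|$ contracts to $O(\varepsilon)$, and coupling it with a single power of $h = O(N^{-1})$ gives exactly the target rate, using the mild restriction $\varepsilon, \mu \leqslant N^{-1}$ under which both meshes are analyzed. No appeal to the sharper pointwise bound on $v_{1}^{(3)}$ is needed at this stage; that refinement will be reserved for the layer-component estimate.
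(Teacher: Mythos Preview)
Your argument is correct and follows essentially the same route as the paper: both reduce to a pointwise truncation estimate for $\vec{v}$, invoke the derivative bounds \eqref{Eq: v1 v2 k=0 to 4}, the uniform step-size bound $h_i \leqslant CN^{-1}$, and the standing assumption $\varepsilon \leqslant \mu \leqslant N^{-1}$. The only notable differences are that the paper writes the interior bound in the form $\varepsilon^{2}h_i^{2}\|v_1^{(4)}\|$ (tacitly glossing over the transition nodes), whereas your $\varepsilon^{2}(h_i+h_{i+1})\|v_1'''\|$ variant handles them explicitly, and that your first display has a harmless sign slip---one has $L^{N}(\vec{V}-\vec{v}) = L\vec{v} - L^{N}\vec{v}$, not the reverse.
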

\begin{proof}
	~The proof relies on \citet{miller2012fitted} and the bounds in \eqref{Eq: y1 at 0 truncation}~--~\eqref{Eq: y1 at N truncation}. For both mesh types, the step size satisfies $h_i \leqslant CN^{-1}.$ Since $\varepsilon \leqslant \mu \leqslant N^{-1}$, for each $0 \leqslant i \leqslant N,$
	\begin{align*}
		|L_1^N (\vec{V} - \vec{v})(x_i)| \leqslant C \varepsilon^2 h_i^2 \| v_1^{(4)} \| \leqslant C \varepsilon^2 h^2 (1 + \varepsilon^{-2}) \leqslant C N^{-2}
	\end{align*}
	The bound on $\|v_1^{(4)}\|$ is given in \eqref{Eq: v1 v2 k=0 to 4}. A similar argument applies to $\|L_2^N (\vec{V} - \vec{v})\|$ and the required result follows.
	\hfill	
\end{proof}

%********************************** TRUNCATION ERROR ANALYSIS FOR W AT X_i = 0, N ************************************

\begin{lemma}     \label{L: truncation error for w at i = 0, N}
	The layer components $\vec{w}$ and $\vec{W}$ from \eqref{Eq: solution decomposition layer part} and \eqref{Eq: discretized solution decomposition layer part}, respectively satisfy the following error bounds at the endpoints $i =0$ and $N$:
	\begin{equation*}
		| L^N (\vec{W} - \vec{w}) (x_i) | \leqslant 
		\begin{cases}
			C N^{-2} \ln^2 N \quad &\text{on S~--~mesh}\\
			C N^{-2} \quad &\text{on BS~--~mesh}.
		\end{cases}
	\end{equation*}
\end{lemma}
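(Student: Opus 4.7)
The plan is to mirror the boundary truncation-error derivation carried out above for the full solution $\vec{y}$, noting that the layer component $\vec{w}$ satisfies a Robin boundary condition of the same form (with modified right-hand side $P - R^{(0)}\vec{v}(0)$) and the homogeneous equation $L\vec{w} = \vec{0}$. Setting $T_{w_j,i} := L_j^N(\vec{w}-\vec{W})(x_i)$, I would expand $w_{1,1}$ and $w_{2,1}$ in Taylor series about $x_0$, cancel the $w_j(x_0)$ and $w_j'(x_0)$ contributions using the Robin BC for $\vec{w}$, and eliminate the cross-coupling term $b_{12}w_{2,k}$ (resp.\ $b_{21}w_{1,k}$) via the ODE at $x_0$ and $x_1$. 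Since the boundary coefficients in \eqref{Eq: left boundary conditions - cubic spline} depend only on the mesh and on the $b_{ij}$'s, the algebraic identities $T_{0,0}=T_{1,0}=T_{2,0}=T_{3,0}=0$ with $T_{4,0}=\tfrac{1}{8}\varepsilon^2\beta_1 h_1^2$ derived previously for $\vec{y}$ transfer verbatim to $\vec{w}$, yielding
\begin{align*}
|T_{w_1,0}| \le C\,\varepsilon^2 \beta_1 h_1^2\,|w_1^{(4)}|_{(x_0,x_1)}, \qquad |T_{w_2,0}| \le C\,\mu^2 \beta_2 h_1^2\,|w_2^{(4)}|_{(x_0,x_1)},
\end{align*}
together with the mirrored estimates at $x_N$ obtained by replacing $h_1,\beta_j$ with $h_N,\delta_j$.

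With these truncation estimates in hand, I would apply the fourth-derivative bound \eqref{Eq: w1 w2 k=3,4}, namely $\varepsilon^2|w_1^{(4)}(x)| + \mu^2|w_2^{(4)}(x)| \le C\bigl(\varepsilon^{-2}\mathbb{B}_\varepsilon(x) + \mu^{-2}\mathbb{B}_\mu(x)\bigr)$. Since $\mathbb{B}_\varepsilon,\mathbb{B}_\mu\le 2$ on $[0,1]$ and $\varepsilon\le\mu$, both estimates reduce to $|L^N(\vec{W}-\vec{w})(x_i)|\le C h_1^2/\varepsilon^2$ at $i=0$ (and $C h_N^2/\varepsilon^2$ at $i=N$). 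The last step is mesh-specific. On the Shishkin mesh, the definition of $\tau_\varepsilon$ gives $h_1=h_N=8\sigma\varepsilon\lambda^{-1}N^{-1}\ln N$, and therefore $h_1^2/\varepsilon^2\le CN^{-2}\ln^2 N$, which is the S-mesh branch. On the BS-mesh, combining the closed-form $h_1=-2\varepsilon\lambda^{-1}\ln\bigl(1-8N^{-1}(1-N^{-\sigma/2})\bigr)$ with the elementary inequality $|\ln(1-t)|\le 2t$ for $t\in[0,1/2]$ yields $h_1\le C\varepsilon N^{-1}$ and hence $h_1^2/\varepsilon^2\le CN^{-2}$; the same holds for $h_N$ by symmetry.

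The principal point that warrants care — and, in my view, the only non-routine step — is the transfer of the $T_{k,0}=0$ cancellations from $\vec{y}$ to $\vec{w}$. The excerpt's derivation for $\vec{y}$ implicitly uses the ODE to rewrite the $F^{\cdot}_{j,0}f_{j,i}$ contributions, so one must verify that in the homogeneous setting for $\vec{w}$ these $f$-terms vanish in concert with the $P_j$ contribution on the discrete Robin right-hand side, leaving the residual $T_{4,0}\,w_j^{(4)}(\xi)$ term intact. Once this bookkeeping is confirmed, the remainder of the proof is a mechanical application of Lemma~\ref{L: derivative estimates of smooth and layer components} and the mesh-width estimates.
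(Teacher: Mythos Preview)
Your proposal is correct and follows essentially the same route as the paper: bound the boundary truncation error by $C\varepsilon^2 h_1^2\|w_1^{(4)}\|$ (respectively $C\mu^2 h_1^2\|w_2^{(4)}\|$), invoke the fourth-derivative estimate \eqref{Eq: w1 w2 k=3,4}, and then use the mesh-specific bound on $h_1$. The paper's proof is terser and does not spell out the transfer of the $T_{k,0}=0$ cancellations to $\vec{w}$, nor does it isolate the sharper bound $h_1\le C\varepsilon N^{-1}$ on the BS mesh (it cites only Lemma~\ref{L: mesh size of BS mesh}, i.e.\ $h_i\le CN^{-1}$, which by itself would not close the estimate $h_1^2/\varepsilon^2\le CN^{-2}$); your explicit computation of $h_1$ from the BS mesh formula is in fact the correct way to complete that step.
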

\begin{proof}
	~ On a Shishkin mesh, the mesh size near the boundary satisfies $h = 8 \sigma \varepsilon \lambda^{-1} N^{-1} \ln N \leqslant C N^{-1} \ln N.$ The error estimate for the layer component proceeds as follows. From \eqref{Eq: w1 w2 k=3,4}, we obtain the bounds for $\|  w_1^{(4)}\|,$ leading to
	\begin{align*}
		| L_1^N (\vec{W} - \vec{w}) (x_i) | & \leqslant C \varepsilon^2 h^2 \|w_1^{(4)}\| \leqslant C \varepsilon^2 h^2 (\varepsilon^{-4} \mathbb{B}_\varepsilon + \varepsilon^{-2} \mu^{-2} \mathbb{B}_\mu ) \\
		& \leqslant Ch^2 \varepsilon^{-2} \leqslant C N^{-2} \ln^2 N.
	\end{align*}
	On BS mesh, however, we have the improved mesh size bound $h \leqslant C N^{-1}, $ as established in Lemma~\ref{L: mesh size of BS mesh} and the same argument gives
	\begin{align*}
		| L_1^N (\vec{W} - \vec{w}) (x_i) | & \leqslant C \varepsilon^2 h^2 \|w_1^{(4)}\|  \leqslant C N^{-2} .
	\end{align*}
	A similar argument applies to $| L_2^N (\vec{W} - \vec{w}) (x_i) |$ and hence the required result follows.
	\hfill
\end{proof}

%*********************************** TRUNCATION ERROR W AT [\TAU_\MU, 1-\TAU_\MU] **********************************

\begin{lemma}  \label{L: truncation error for w at [tau_mu, 1-tau_mu]}
	In the outer region $[\tau_\mu, 1-\tau_\mu]$, the truncation error satisfies
	\begin{equation*}
		| L^N (\vec{W} - \vec{w})(x_i) | \leqslant C N^{-2}, 
	\end{equation*}
	for $N/4 \leqslant i \leqslant 3N/4,$ on both S-mesh and BS-mesh.
\end{lemma}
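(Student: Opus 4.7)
The plan is to exploit the exponential smallness of the layer components in the outer region combined with the uniform mesh step $h \leqslant 2 N^{-1}$ on $[\tau_\mu, 1-\tau_\mu]$. Because $L^N\vec{W} = \vec{0}$ by \eqref{Eq: discretized solution decomposition layer part}, $L\vec{w} = \vec{0}$ by \eqref{Eq: solution decomposition layer part}, and the reaction term $B(x_i)\vec{w}(x_i)$ is common to both $L^N$ and $L$, the truncation error reduces componentwise to
\begin{equation*}
|L_1^N(\vec{W}-\vec{w})(x_i)| = \varepsilon^2\,\bigl|(\delta^2 - d^2/dx^2)\,w_1(x_i)\bigr|, \quad |L_2^N(\vec{W}-\vec{w})(x_i)| = \mu^2\,\bigl|(\delta^2 - d^2/dx^2)\,w_2(x_i)\bigr|.
\end{equation*}

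The key geometric observation is that for $x \in [\tau_\mu, 1-\tau_\mu]$, since $\tau_\mu = (\sigma\mu/\lambda)\ln N$ and $\varepsilon \leqslant \mu$,
\begin{equation*}
\mathbb{B}_\mu(x) \leqslant 2 N^{-\sigma}, \qquad \mathbb{B}_\varepsilon(x) \leqslant 2 N^{-\sigma\mu/\varepsilon} \leqslant 2 N^{-\sigma},
\end{equation*}
so with $\sigma \geqslant 2$ (the standard choice) both layer terms are $O(N^{-2})$. I would then split each component by the triangle inequality, $|L_1^N(\vec{W}-\vec{w})(x_i)| \leqslant \varepsilon^2|\delta^2 w_1(x_i)| + \varepsilon^2|w_1''(x_i)|$, and handle the two pieces separately. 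Using the crude estimate $\varepsilon^2|\delta^2 w_1(x_i)| \leqslant 4\varepsilon^2 h^{-2} \max_{j}|w_1(x_j)|$ together with $\varepsilon \leqslant N^{-1}$ gives $\varepsilon^2 h^{-2} \leqslant C$, and \eqref{Eq: derivative bounds for w1 w2 for k=0 to 2} with $k=0$ gives $\max|w_1| \leqslant C(\mathbb{B}_\varepsilon + \mathbb{B}_\mu) \leqslant CN^{-2}$. For the second piece, \eqref{Eq: derivative bounds for w1 w2 for k=0 to 2} with $k=2$ gives $\varepsilon^2|w_1''(x_i)| \leqslant C(\mathbb{B}_\varepsilon + (\varepsilon/\mu)^2 \mathbb{B}_\mu) \leqslant CN^{-2}$.

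The second component is disposed of identically: $|w_2| \leqslant C\mathbb{B}_\mu$, $\mu^2|w_2''| \leqslant C\mathbb{B}_\mu$, and $\mu^2 h^{-2} \leqslant C$ replaces the $\varepsilon$-factor. The main obstacle, as I see it, is resisting the temptation to reuse the fourth-derivative Taylor-remainder bound that worked in Lemma~\ref{L: truncation error for w at i = 0, N}: here \eqref{Eq: w1 w2 k=3,4} gives $\varepsilon^2\|w_1^{(4)}\| \leqslant C(\varepsilon^{-2}\mathbb{B}_\varepsilon + \mu^{-2}\mathbb{B}_\mu)$, and the inverse prefactor $\mu^{-2}$ can overwhelm the $h^2 \leqslant CN^{-2}$ saving in regimes where $\mu$ is much smaller than $N^{-1}$; bounding $\delta^2$ directly by $h^{-2}\|w_j\|$, together with the second-derivative estimate, avoids these inverse powers entirely and delivers the required $CN^{-2}$ uniformly on both the S and BS meshes.
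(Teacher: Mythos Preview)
Your argument captures the essential mechanism---exponential smallness of $\mathbb{B}_\varepsilon$ and $\mathbb{B}_\mu$ on $[\tau_\mu, 1-\tau_\mu]$ rather than any Taylor-remainder saving---and your closing observation about avoiding the fourth-derivative route is exactly the point. The paper follows the same strategy but makes a tighter choice in one step: rather than splitting $|(\delta^2 - d^2/dx^2)w_1|$ and bounding $|\delta^2 w_1|$ crudely by $Ch^{-2}\max|w_1|$, it bounds the whole expression by $C\max_{[x_{i-1},x_{i+1}]}|w_1''|$ (the standard divided-difference estimate, valid on any mesh). Then \eqref{Eq: derivative bounds for w1 w2 for k=0 to 2} with $k=2$ gives $\varepsilon^2\max|w_1''| \leqslant C\bigl(\mathbb{B}_\varepsilon + (\varepsilon/\mu)^2\mathbb{B}_\mu\bigr) \leqslant C\,\mathbb{B}_\mu(x_{i-1}) \leqslant CN^{-\sigma}$ in one stroke, without invoking $\varepsilon \leqslant N^{-1}$.

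This difference matters at the transition indices $i = N/4$ and $i = 3N/4$, which lie in the lemma's range but where $h_i \neq h_{i+1}$ (one step is the fine layer step, the other the coarse outer step). Your bound $\varepsilon^2 h^{-2} \leqslant C$ presumes the uniform step $h \sim N^{-1}$; on the S-mesh at $i = N/4$ the correct non-uniform factor is $\varepsilon^2/(h_i h_{i+1})$ with $h_i \sim \mu N^{-1}\ln N$, and the resulting product $\varepsilon^2 N^{2-\sigma}/(\mu\ln N)$ does not reach $CN^{-2}$ when $\sigma = 2$ and, say, $\varepsilon = \mu$. The paper's $\max|w_1''|$ route sidesteps this because it never divides by mesh widths; it would be cleanest to adopt that bound for $\delta^2 w_1$ and drop your triangle-inequality split altogether.
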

\begin{proof}
	~	In this region, the fitted mesh is uniform for both S-mesh and BS-mesh, with $h_i \leqslant C N^{-1}.$
	From \eqref{truncation error central scheme y1}, it follows that for $N/4 \leqslant i \leqslant 3N/4,$
	\begin{equation*}
		|L_1^N (\vec{W} - \vec{w})(x_i)| = \varepsilon^2 \left | \left( \delta^2 - \dfrac{d^2}{dx^2} \right) w_1(x_i) \right |.
	\end{equation*}
	For the layer region $[\tau_\mu, 1/2],$ applying the derivative bounds from \eqref{Eq: derivative bounds for w1 w2 for k=0 to 2} yields
	\begin{align*}
		\varepsilon^2 \left | \left( \delta^2 - \dfrac{d^2}{dx^2} \right) w_1(x_i) \right | & \leqslant C \varepsilon^2 |w_1''| _{[x_{i-1},x_{i+1}]} \leqslant C (\mathbb{B}_\varepsilon + \varepsilon^2 \mu^{-2} \mathbb{B}_\mu)  \\
		& \leqslant \| \mathbb{B}_\mu \|_{[x_{i-1},x_{i+1}]} \leqslant \mathbb{B}_\mu(x_{i-1}) \leqslant C N^{-2}.
	\end{align*}
	Similarly, in the interval $[1/2, 1-\tau_\mu],$ we have
	\begin{equation*}
		\varepsilon^2 \left | \left( \delta^2 - \dfrac{d^2}{dx^2} \right) w_1(x_i) \right | \leqslant C N^{-2}.
	\end{equation*}
	This establishes the desired estimate.
	\hfill
\end{proof}

%****************************** TRUNCATION ERRORS IN (0, TAU_EPS), (1-TAU_EPS, 1) ***************

\begin{lemma}  \label{L: truncation error in the inner region 1}
	In the boundary layers $(0, \tau_\varepsilon)$ and $(1-\tau_\varepsilon,1)$, the discrete layer component $\vec{W}$ satisfies
	\begin{equation*}
		| L^N(\vec{W} - \vec{w}) (x_i) | \leqslant 
		\begin{cases}
			C N^{-2} \ln^2 N \quad &\text{on S-mesh}\\
			C N^{-2} \quad &\text{on BS-mesh}.
		\end{cases}
	\end{equation*}
	for $ 0 < i < N/8$ and $7N/8 < i < N$.
\end{lemma}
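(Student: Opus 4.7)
The plan is to reduce the interior residual to the classical central-difference truncation error and then argue separately on the S- and BS-meshes, since the inner-layer geometries differ substantially. At an interior node $x_i$, the cancellations in \eqref{Eq: discretization central scheme} give
\begin{align*}
    L_1^N(\vec{W} - \vec{w})(x_i) &= \varepsilon^2\left(\delta^2 - d^2/dx^2\right) w_1(x_i), \\
    L_2^N(\vec{W} - \vec{w})(x_i) &= \mu^2\left(\delta^2 - d^2/dx^2\right) w_2(x_i),
\end{align*}
as already recorded in \eqref{truncation error central scheme y1}, so the task reduces to estimating $(\delta^2 - d^2/dx^2) w_j(x_i)$ and multiplying by $\varepsilon^2$ or $\mu^2$.

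For the Shishkin mesh, the inner layer $(0,\tau_\varepsilon)$ (and symmetrically $(1-\tau_\varepsilon,1)$) carries a uniform step $h = 8\sigma\varepsilon\lambda^{-1}N^{-1}\ln N$. I would insert the classical uniform-mesh Taylor estimate $|(\delta^2 - d^2/dx^2) w_1(x_i)| \leq (h^2/12)\|w_1^{(4)}\|_{[x_{i-1},x_{i+1}]}$ into the derivative bound \eqref{Eq: w1 w2 k=3,4}, namely $\varepsilon^2\|w_1^{(4)}\| \leq C(\varepsilon^{-2}\mathbb{B}_\varepsilon + \mu^{-2}\mathbb{B}_\mu)$. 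Combining these with $\varepsilon \leq \mu$ and $h \leq C\varepsilon N^{-1}\ln N$ produces $\varepsilon^2 h^2 \|w_1^{(4)}\| \leq CN^{-2}\ln^2 N$. The argument for $L_2^N$ is entirely parallel with $\mu$ in the role of $\varepsilon$.

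For the BS mesh the inner layer is non-uniform: Lemma~\ref{L: mesh size of BS mesh} gives only $h_i \leq CN^{-1}$, which combined with the $\varepsilon^{-4}\mathbb{B}_\varepsilon + \varepsilon^{-2}\mu^{-2}\mathbb{B}_\mu$ blow-up of $\|w_1^{(4)}\|$ is insufficient. I would first derive the sharper graded estimate $h_i\,\mathbb{B}_\varepsilon(x_{i-1})^{1/2} \leq C\varepsilon N^{-1}$ directly from the construction $\exp(-\lambda x_i/(2\varepsilon)) = \mathfrak{R} i + \mathfrak{S}$, and then decompose $\vec{w} = \hat{\vec{w}}_\varepsilon + \hat{\vec{w}}_\mu$ via Lemma~\ref{L: w derivatives} so that each component is controlled by the derivative bound best suited to it. For $\hat{w}_{\mu,1}$ I would apply the standard Taylor truncation together with the fourth-derivative bound $\varepsilon^2|\hat{w}^{(4)}_{\mu,1}| \leq C\mu^{-2}\mathbb{B}_\mu$; the fact that the BS mesh on $(0,\tau_\varepsilon)$ is substantially finer than any $\mu$-graded mesh would demand (since $\varepsilon \leq \mu$) then delivers an $O(N^{-2})$ bound. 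For $\hat{w}_{\varepsilon,1}$ only the second-derivative estimate $\varepsilon^2|\hat{w}''_{\varepsilon,1}| \leq C\mathbb{B}_\varepsilon$ is available, so I would write the central-difference residual in the integral form
\begin{equation*}
\varepsilon^2 \delta^2 \hat{w}_{\varepsilon,1}(x_i) = \frac{2}{h_i+h_{i+1}}\left[\int_{x_i}^{x_{i+1}}\frac{x_{i+1}-s}{h_{i+1}}\,\varepsilon^2\hat{w}''_{\varepsilon,1}(s)\,ds + \int_{x_{i-1}}^{x_i}\frac{s-x_{i-1}}{h_i}\,\varepsilon^2\hat{w}''_{\varepsilon,1}(s)\,ds\right],
\end{equation*}
bound the integrals via $\int_{x_{i-1}}^{x_{i+1}}\mathbb{B}_\varepsilon(s)\,ds \leq (\varepsilon/\lambda)\mathbb{B}_\varepsilon(x_{i-1})$, and finally absorb the residual $\varepsilon/h_i$ factor using the graded identity $h_i\,\mathbb{B}_\varepsilon(x_{i-1})^{1/2} \leq C\varepsilon N^{-1}$ paired with the exponential decay of $\mathbb{B}_\varepsilon$, to conclude $|\varepsilon^2(\delta^2 - d^2/dx^2)\hat{w}_{\varepsilon,1}(x_i)| \leq CN^{-2}$.

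The technically hardest step is the BS-mesh estimate for $\hat{w}_{\varepsilon,1}$, where the absence of a fourth-derivative bound forces the integral-form representation above; the argument must simultaneously track the exponential decay of $\mathbb{B}_\varepsilon$ and the explicit BS grading to fully cancel the $\varepsilon^{-2}$ growth of $\hat{w}''_{\varepsilon,1}$. Once this is done, the right inner-layer bound on $(1-\tau_\varepsilon,1)$ and the second-component residual $L_2^N(\vec{W}-\vec{w})$ follow by entirely symmetric arguments.
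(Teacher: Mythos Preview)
Your S-mesh treatment is exactly the argument the paper has in mind: the paper's own proof is the single sentence ``the result follows by a similar argument as in Lemma~\ref{L: truncation error for w at i = 0, N}'', and unpacking that lemma gives precisely your uniform-step bound $h\le C\varepsilon N^{-1}\ln N$ combined with the fourth-derivative estimate \eqref{Eq: w1 w2 k=3,4}.

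For the BS-mesh you go well beyond the paper's one-line deferral, and you are right that mere $h_i\le CN^{-1}$ (Lemma~\ref{L: mesh size of BS mesh}) together with $\varepsilon^2\|w_1^{(4)}\|$ is not enough, since $\varepsilon^2\|w_1^{(4)}\|\le C\varepsilon^{-2}$ leaves an uncontrolled $\varepsilon^{-2}$. Your graded identity $h_i\,\mathbb{B}_\varepsilon(x_{i-1})^{1/2}\le C\varepsilon N^{-1}$ is the correct tool, and its square $h_i^2\varepsilon^{-2}\mathbb{B}_\varepsilon(x_{i-1})\le CN^{-2}$ does handle the $\varepsilon^{-2}\mathbb{B}_\varepsilon$ contribution to $\varepsilon^2|w_1^{(4)}|$.

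However, the step you flag as hardest---the $\hat w_{\varepsilon,1}$ piece via the integral form---does not close as written. Your integral representation expresses $\varepsilon^2\delta^2\hat w_{\varepsilon,1}(x_i)$, not $\varepsilon^2(\delta^2-d^2/dx^2)\hat w_{\varepsilon,1}(x_i)$; after bounding the integrand by $C\mathbb{B}_\varepsilon$ and integrating, you obtain $|\varepsilon^2\delta^2\hat w_{\varepsilon,1}(x_i)|\le C\mathbb{B}_\varepsilon(x_{i-1})$, while separately $|\varepsilon^2\hat w_{\varepsilon,1}''(x_i)|\le C\mathbb{B}_\varepsilon(x_i)$. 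Both quantities are $O(1)$ for $x_i$ near $0$, and the graded identity alone cannot manufacture an $O(N^{-2})$ bound on their \emph{difference}: with only second-derivative control on $\hat w_{\varepsilon,1}$ there is no mechanism for the required cancellation. A similar issue affects your $\hat w_{\mu,1}$ step: the claim that the $\varepsilon$-graded BS mesh is ``substantially finer than any $\mu$-graded mesh would demand'' fails near $x=\tau_\varepsilon$, where $h_i\sim \varepsilon N^{\sigma/2-1}$ can exceed the $\mu$-graded width $\mu N^{-1}\exp(\lambda x_i/(2\mu))$ when $\varepsilon/\mu$ is of moderate size. What is actually needed is a fourth-derivative-level argument on a component that admits such bounds, paired with the squared graded identity, together with a separate treatment of the non-uniformity term $(h_{i+1}-h_i)w^{(3)}$ via the BS relation $|h_{i+1}-h_i|\le Ch_i^2/\varepsilon$.
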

\begin{proof}
	~ The result follows by a similar argument as in Lemma \ref{L: truncation error for w at i = 0, N}.
	\hfill
\end{proof}

%******************************** TRUNCATION ERRORS IN [\TAU_\EPS, \TAU_\MU) AND (1-\TAU_\MU, 1-\TAU_\EPS] **********

\begin{lemma}  \label{L: truncation error in the inner region 2} 
	The layer component $\vec{W}$ satisfies the error bound 
	\begin{equation*}
		| L^N(\vec{W} - \vec{w}) (x_i) | \leqslant 
		\begin{cases}
			C N^{-2} \ln^2 N \quad &\text{on S-mesh}\\
			C N^{-2} \quad &\text{on BS-mesh}.
		\end{cases}
	\end{equation*}
	in the regions $[\tau_\varepsilon, \tau_\mu)$ and $(1-\tau_\mu, 1-\tau_\varepsilon];$ that is, for $N/8 \leqslant i < N/4$ and $3N/4 < i \leqslant 7N/8, $ where $C$ is a generic constant.
\end{lemma}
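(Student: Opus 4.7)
The plan is to follow the same template as Lemma~\ref{L: truncation error in the inner region 1} and Lemma~\ref{L: truncation error for w at i = 0, N}, but to replace the direct estimate on $w_1^{(4)}$ by a term-by-term estimate against the decomposition $\vec{w} = \vec{\hat{w}}_\varepsilon + \vec{\hat{w}}_\mu$ of Lemma~\ref{L: w derivatives}. A decomposition is needed because in the region $[\tau_\varepsilon,\tau_\mu)$ the mesh width is controlled by $\mu$ rather than $\varepsilon$ --- namely $h_i\le C\mu N^{-1}\ln N$ on the S-mesh and $h_i\le C\mu N^{-1}$ on the BS-mesh (the latter from Case~2 of Lemma~\ref{L: mesh size of BS mesh}) --- so the naive bound $\varepsilon^2 h_i^2 \|w_1^{(4)}\|$ that sufficed near the boundary is no longer small enough.

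First I would write $L_1^N(\vec{W}-\vec{w})(x_i)=\varepsilon^2(\delta^2-d^2/dx^2)w_1(x_i)$ and split it along $w_1=\hat{w}_{\varepsilon,1}+\hat{w}_{\mu,1}$. For the $\hat{w}_\mu$-component the estimate $\varepsilon^2|\hat{w}_{\mu,1}^{(4)}(x)|\le C\mu^{-2}\mathbb{B}_\mu(x)$ from Lemma~\ref{L: w derivatives}, together with the standard $O(h^2)$ central-difference truncation, gives $\varepsilon^2|(\delta^2-d^2/dx^2)\hat{w}_{\mu,1}(x_i)|\le C h_i^2\mu^{-2}\mathbb{B}_\mu(x_i)$, which is at most $CN^{-2}\ln^2 N$ on the S-mesh and $CN^{-2}$ on the BS-mesh upon inserting the mesh-width bounds. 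For the $\hat{w}_\varepsilon$-component, since $x_i\ge\tau_\varepsilon$ implies $\mathbb{B}_\varepsilon(x_i)\le 2N^{-\sigma}$, combining the crude bound $|\delta^2 g(x_i)-g''(x_i)|\le C\|g''\|_{[x_{i-1},x_{i+1}]}$ with $\varepsilon^2|\hat{w}_{\varepsilon,1}''(x)|\le C\mathbb{B}_\varepsilon(x)$ yields at most $CN^{-\sigma}\le CN^{-2}$ provided $\sigma\ge 2$. Summing the two contributions gives the bound for the first component; the argument for $L_2^N$ uses the companion estimates on $\hat{w}_{\mu,2}^{(4)}$ and $\hat{w}_{\varepsilon,2}''$, and the symmetric region $(1-\tau_\mu,\,1-\tau_\varepsilon]$ is handled by mirror symmetry.

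The principal obstacle I expect is the non-uniformity of the mesh at the transition index $i=N/8$ on the S-mesh (where $h_{N/8}\sim\varepsilon\ln N/N$ abuts $h_{N/8+1}\sim\mu\ln N/N$) and the graded spacing throughout this region on the BS-mesh. For such non-uniform stencils the central difference picks up an additional first-order term $\tfrac{h_{i+1}-h_i}{3}g'''(x_i)$ that must be controlled. I would dispose of this either by showing that the irregular contribution is absorbed by the $\hat{w}_\varepsilon$-piece (using $\mathbb{B}_\varepsilon(x_{N/8-1})\le CN^{-\sigma}$, which follows from $x_{N/8-1}\ge\tau_\varepsilon(1-8/N)$), or, on the BS-mesh, by invoking the Peano-kernel representation of the truncation error to recover an $O(\max(h_i,h_{i+1})^2\|g^{(4)}\|)$ bound; this last step relies on adjacent BS-mesh widths being comparable, a property inherited from the smooth function $\exp(-\lambda x/(2\mu))$ used to generate the mesh.
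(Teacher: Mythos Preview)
Your approach is essentially the paper's: decompose $\vec{w}$ into its $\varepsilon$- and $\mu$-layer parts, bound the $\varepsilon$-piece by the crude second-derivative estimate together with $\mathbb{B}_\varepsilon\le CN^{-\sigma}$ on $[\tau_\varepsilon,\tau_\mu)$, and bound the $\mu$-piece by the standard $h^2$ truncation estimate using $\varepsilon^2|\hat w_{\mu,1}^{(4)}|\le C\mu^{-2}\mathbb{B}_\mu$ (the paper actually writes the unhatted pair $(w_\varepsilon,w_\mu)$ but then invokes exactly this fourth-derivative bound, so your explicit use of the hatted decomposition is the cleaner reading of Lemma~\ref{L: w derivatives}). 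Your closing paragraph on the non-uniform stencil at $i=N/8$ is more scrupulous than the paper itself, which simply applies the uniform-mesh $h^2$ bound without comment.
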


\begin{proof}
	~	We decompose $\vec{w} = \vec{w_{\varepsilon}} + \vec{w_{\mu}}$ to analyze the layer error. A direct estimate gives
	\begin{equation*}
		| L_1^N (\vec{W_{\varepsilon}} - \vec{w_{\varepsilon}})(x_i) | \leqslant C \varepsilon^2 \| w_{1,\varepsilon}'' \|_{[x_{i-1},x_{i+1}]}
		\leqslant C \| \mathbb{B}_\varepsilon \|_{[x_{i-1},x_{i+1}]}.
	\end{equation*}
	For $x_{i-1} \leqslant \tau_\varepsilon,$ applying the bound from lemma~\ref{L: w derivatives} yields
	\begin{equation*}
		| L_1^N (\vec{W_{\varepsilon}} - \vec{w_{\varepsilon}})(x_i) | \leqslant C \mathbb{B}_\varepsilon(x_{i-1}) \leqslant C N^{-2}.
	\end{equation*}
	Similarly, using lemma~\ref{L: w derivatives} for $w_{1,\mu},$ we obtain
	\begin{align*}
		| L_1^N (\vec{W_{\mu}} - \vec{w_{\mu}}) | & \leqslant C \varepsilon^2 h^2 \| w_{1,\mu}^{(4)} \| \leqslant C \varepsilon^2 h^2 \varepsilon^{-2} \mu^{-2} \mathbb{B}_\mu \\
		& \leqslant C N^{-2} \ln^2 N,
	\end{align*}
	in case of Shishkin mesh. In case of BS-mesh,
	\begin{equation*}
		| L_1^N (\vec{W_{\mu}} - \vec{w_{\mu}}) |  \leqslant C N^{-2} .
	\end{equation*}
	Combining both parts, we have
	\begin{equation*}
		| L_1^N (\vec{W} - \vec{w})(x_i) | \leqslant 	
		\begin{cases}
			C N^{-2} \ln^2 N \quad &\text{on S-mesh}\\
			C N^{-2} \quad &\text{on BS-mesh}.
		\end{cases}
	\end{equation*}
	The result then follows similarly for  $L_2^N.$
	\hfill
\end{proof}

	%******************************* MAIN THEOREM ***************************************************************************

\begin{theorem}  \label{Th: main theorem}
	Consider the exact solution $\vec{y}(x) = (y_1(x), y_2(x))^T$ of the system \eqref{P: system general}~--~\eqref{P: robin boundary conditions}, and let $\vec{Y}=(Y_1, Y_2)^T$ denote the numerical approximation obtained via the scheme \eqref{P: discretization system general}~--~\eqref{P: discretization robin boundary conditions}. If the conditions of Lemma~\ref{L: m matrix} are satisfied and $\varepsilon \leqslant \mu \leqslant N^{-1},$ then the following parameter-uniform error bound holds:
	\begin{equation*}
		| \vec{y}(x_i) - \vec{Y}(x_i) | \leqslant
		\begin{cases}
			C N^{-2} \ln^2 N \quad &\text{on S~--~mesh} \\
			C N^{-2} \quad &\text{on BS~--~mesh}
		\end{cases}
		\qquad	i = 0, \dotsc, N.
	\end{equation*}
\end{theorem}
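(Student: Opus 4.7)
The plan is to exploit the discrete decomposition $\vec{Y} = \vec{V} + \vec{W}$ introduced in \eqref{Eq: discretized solution decomposition smooth part}--\eqref{Eq: discretized solution decomposition layer part}, which mirrors the continuous splitting $\vec{y} = \vec{v} + \vec{w}$, and to control the error component by component. By the triangle inequality
\[
|\vec{y}(x_i) - \vec{Y}(x_i)| \leqslant |\vec{v}(x_i) - \vec{V}(x_i)| + |\vec{w}(x_i) - \vec{W}(x_i)|,
\]
so it is enough to bound each piece separately. By construction of the auxiliary problems, both $\vec{v} - \vec{V}$ and $\vec{w} - \vec{W}$ satisfy discrete systems whose right-hand sides and discrete Robin data are exactly the truncation residuals of the scheme applied to $\vec{v}$ and $\vec{w}$ respectively. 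Thus each estimate reduces to combining an already-available truncation bound with the discrete stability inequality of Lemma~\ref{L: discrete stability result}.

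First, I would handle the smooth component. Lemma~\ref{L: truncation error for v} already delivers the uniform bound $\|L^N(\vec{V} - \vec{v})\|_{\Omega^N} \leqslant C N^{-2}$ on both meshes (noting that the $i=0,N$ entries of $L^N$ are interpreted as the discrete Robin operators $R^{(0),N}, R^{(1),N}$, for which the cubic-spline construction makes the residual vanish to sufficiently high order on the smooth part via the identities $T_{0,0}=T_{1,0}=T_{2,0}=T_{3,0}=0$). Feeding this into Lemma~\ref{L: discrete stability result} immediately yields $\|\vec{V} - \vec{v}\|_\infty \leqslant C N^{-2}$ on both $\Omega_S$ and $\Omega_{BS}$.

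For the layer component the situation is a little more delicate because the truncation estimate is established region by region in Lemmas~\ref{L: truncation error for w at i = 0, N}--\ref{L: truncation error in the inner region 2} (boundary nodes; inner region $[\tau_\mu,1-\tau_\mu]$; outer boundary layers near $0$ and $1$; and the mixed transition strips $[\tau_\varepsilon,\tau_\mu]$ and $[1-\tau_\mu,1-\tau_\varepsilon]$). I would stitch these five estimates into a single pointwise truncation bound
\[
|L^N(\vec{W} - \vec{w})(x_i)| \leqslant
\begin{cases}
C N^{-2} \ln^2 N & \text{on } \Omega_S, \\
C N^{-2} & \text{on } \Omega_{BS},
\end{cases}
\qquad 0 \leqslant i \leqslant N,
\]
and then apply Lemma~\ref{L: discrete stability result} a second time to convert this into the same pointwise bound on $\|\vec{W} - \vec{w}\|_\infty$. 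Combining with the smooth estimate produces the theorem.

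I expect the main obstacle to be bookkeeping rather than a deep new argument: one must verify that the region-wise truncation bounds combine cleanly across the four transition points $\tau_\varepsilon,\tau_\mu,1-\tau_\mu,1-\tau_\varepsilon$ where the mesh grading changes, and confirm that the cubic-spline boundary residuals at $i=0,N$, which carry factors like $\varepsilon^2 \beta_1 h_1^2 |y_1^{(4)}|$ from \eqref{Eq: y1 at 0 truncation}--\eqref{Eq: y1 at N truncation}, are indeed $O(N^{-2})$ on the BS-mesh without the stray $\ln^2 N$. This last step is exactly where the sharpened mesh-width bound from Lemma~\ref{L: mesh size of BS mesh} (in particular $h_1 = O(\varepsilon N^{-1})$ inside the $\varepsilon$-layer on the BS-mesh, versus $O(\varepsilon N^{-1}\ln N)$ on the S-mesh), combined with the standing hypothesis $\varepsilon \leqslant \mu \leqslant N^{-1}$, buys the improvement from $N^{-2}\ln^2 N$ on $\Omega_S$ to the optimal $N^{-2}$ rate on $\Omega_{BS}$.
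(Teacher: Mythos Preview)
Your proposal is correct and rests on the same truncation Lemmas~\ref{L: truncation error for v}--\ref{L: truncation error in the inner region 2} as the paper, but the final step differs in structure. You split the error as $(\vec v-\vec V)+(\vec w-\vec W)$ and invoke the discrete stability result (Lemma~\ref{L: discrete stability result}) separately on each piece, which is the constant-barrier argument $(1,1)^T$. The paper instead works with the full error $\vec Y-\vec y$ and builds an explicit barrier $\vec\Phi$ containing, in addition to the constant term, piecewise linear hat functions $\Psi_1,\Psi_2$ supported on the layer regions and scaled by $\tau_\varepsilon^2/\varepsilon$ and $\tau_\mu^2/\mu$; it then verifies $|L^N(\vec Y-\vec y)|\leqslant L^N\vec\Phi$ and appeals to the $M$-matrix property. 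Because Lemmas~\ref{L: truncation error for v}--\ref{L: truncation error in the inner region 2} already deliver a \emph{uniform} truncation bound across all mesh regions (including the transition nodes), the extra $\Psi_1,\Psi_2$ structure is not actually needed to close the argument, and your direct route via Lemma~\ref{L: discrete stability result} is both valid and shorter. The paper's barrier would become essential only if the truncation error blew up like $\varepsilon^{-1}$ or $\mu^{-1}$ inside the layers, since the concave kinks of $\Psi_1,\Psi_2$ at $\tau_\varepsilon,\tau_\mu$ then supply compensating positive mass in $-\varepsilon^2\delta^2\Psi$; here that mechanism is dormant.
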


\begin{proof}
	~ For the case of S-mesh, we define a barrier function $\vec{\Phi}_S = (\Phi_1, \Phi_2)^T$ by
	\begin{align*}
		\vec{\Phi}_S(x_i) = C \left (N^{-2} \ln^2 N  + N^{-2} \dfrac{\tau_\varepsilon^2}{\varepsilon} \Psi_1(x_i)  + N^{-2} \dfrac{\tau_\mu^2}{\mu} \Psi_2(x_i) \right) 
		\begin{pmatrix}
			1 \\ 1
		\end{pmatrix},
		\quad i = 1, \dotsc, N-1.
	\end{align*}
	Here, the functions $\Psi_1$ and $\Psi_2$ are defined piecewise as:
	\begin{equation*}
		\Psi_1(x) =
		\begin{cases}
			x / \tau_\varepsilon,   \quad & 0 \leqslant x \leqslant \tau_\varepsilon \\
			1,  \quad &\tau_\varepsilon \leqslant x \leqslant 1-\tau_\varepsilon \\
			(1-x) / \tau_\varepsilon, \quad &1-\tau_\varepsilon \leqslant x \leqslant 1
		\end{cases}
	\end{equation*}
	and
	\begin{equation*}
		\Psi_2(x) =
		\begin{cases}
			x / \tau_\mu,   \quad & 0 \leqslant x \leqslant \tau_\mu \\
			1,  \quad &\tau_\mu \leqslant x \leqslant 1-\tau_\mu \\
			(1-x) / \tau_\mu, \quad &1-\tau_\mu \leqslant x \leqslant 1.
		\end{cases}
	\end{equation*}
	With this construction, it can be shown after simplification that
	\begin{equation*}
		\begin{aligned}
			L_1^N \vec{\Psi} \geqslant\; & C N^{-2} \ln^2 N \left( A_{1,i}^- + A_{1,i}^c + A_{1,i}^+ + B_{1,i}^- + B_{1,i}^c + B_{1,i}^+ \right) \\
			& + C \frac{\tau_\varepsilon^2}{\varepsilon} N^{-2} \left[
			(A_{1,i}^- + B_{1,i}^-) \Psi_{1,i-1} +
			(A_{1,i}^c + B_{1,i}^c) \Psi_{1,i} +
			(A_{1,i}^+ + B_{1,i}^+) \Psi_{1,i+1}
			\right] \\
			& + C \frac{\tau_\mu^2}{\mu} N^{-2} \left[
			(A_{2,i}^- + B_{2,i}^-) \Psi_{2,i-1} +
			(A_{2,i}^c + B_{2,i}^c) \Psi_{2,i} +
			(A_{2,i}^+ + B_{2,i}^+) \Psi_{2,i+1}
			\right].
		\end{aligned}
	\end{equation*}
	From Lemmas~\ref{L: truncation error for v}-\ref{L: truncation error in the inner region 2}, it follows that the truncation error is bounded by
	\begin{equation*}
		|T_{y_1,i}| = |L_1^N (\vec{Y} - \vec{y})| \leqslant |L_1^N \vec{\Psi}|.
	\end{equation*}
	Similarly, for the second component of the system,
	\begin{equation*}
		|T_{y_2,i}| = |L_2^N (\vec{Y} - \vec{y})| \leqslant |L_2^N \vec{\Psi}|.
	\end{equation*}
	By Lemma~\ref{L: m matrix}, the discrete operator $L^N$ corresponds to an M-matrix, implying that its inverse is uniformly bounded with respect to the parameters. Therefore, we obtain the error estimate
	\begin{equation*}
		\| \vec{Y} - \vec{y} \| \leqslant \| \vec{\Psi}\| \leqslant C N^{-2} \ln^2 N
	\end{equation*}
	in case of Shishkin mesh. For BS mesh, we define a similar barrier function:
	\begin{align*}
		\vec{\Phi}_{BS}(x_i) = C \left (N^{-2}  + N^{-2} \dfrac{\tau_\varepsilon^2}{\varepsilon} \Psi_1(x_i)  + N^{-2} \dfrac{\tau_\mu^2}{\mu} \Psi_2(x_i) \right) 
		\begin{pmatrix}
			1 \\ 1
		\end{pmatrix},
		\quad i = 1, \dotsc, N-1,
	\end{align*}
	Following analogous steps, we conclude that
	\begin{equation*}
		\| \vec{Y} - \vec{y} \| \leqslant \| \vec{\Psi}\| \leqslant C N^{-2} .
	\end{equation*}
	This completes the proof.
	\hfill
\end{proof}  %

	The next result demonstrates that a parameter-uniform global approximation is obtained by applying piecewise linear interpolation to the computed solution $\vec{Y}$.
\begin{theorem}
	The numerical solution $\vec{Y}$ from \eqref{Eq: left boundary conditions - cubic spline}~--~\eqref{Eq: right boundary conditions - cubic spline} converges uniformly to the exact solution $\vec{y}$ of \eqref{P: system general}~--~\eqref{P: robin boundary conditions}, satisfying the following parameter-uniform error estimate.
	\begin{equation*}
		\| \vec{Y}_h - \vec{y} \| \leqslant \|
		\begin{cases}
			C N^{-2} \ln^2 N \quad &\text{on S~--~mesh} \\
			C N^{-2} \quad &\text{on BS~--~mesh},                                     
		\end{cases}
	\end{equation*}
	where $\vec{Y}_h$ is the piecewise linear interpolant of $\vec{Y}$ on $\Bar{\Omega}.$
\end{theorem}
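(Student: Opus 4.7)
The plan is to bound the global error using the triangle inequality
\[
\|\vec{Y}_h - \vec{y}\|_{\bar{\Omega}} \leqslant \|\vec{Y}_h - \vec{y}_I\|_{\bar{\Omega}} + \|\vec{y}_I - \vec{y}\|_{\bar{\Omega}},
\]
where $\vec{y}_I$ denotes the piecewise linear interpolant of the exact solution $\vec{y}$ on the same mesh $\Omega^N$. Since $\vec{Y}_h$ and $\vec{y}_I$ are both piecewise linear, their difference attains its maximum at the nodes, so
\[
\|\vec{Y}_h - \vec{y}_I\|_{\bar{\Omega}} = \max_{0 \leqslant i \leqslant N} |\vec{Y}(x_i) - \vec{y}(x_i)|,
\]
and Theorem~\ref{Th: main theorem} immediately yields the desired nodal bound $CN^{-2}\ln^2 N$ on the S--mesh and $CN^{-2}$ on the BS--mesh. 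So the whole task reduces to proving the same bound for the linear interpolation error $\|\vec{y}_I - \vec{y}\|_{\bar{\Omega}}$.

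To estimate $\|\vec{y}_I - \vec{y}\|_{\bar{\Omega}}$, I would invoke the smooth/layer decomposition $\vec{y} = \vec{v} + \vec{w}$ from \eqref{Eq: solution decomposition smooth part}--\eqref{Eq: solution decomposition layer part}. On each subinterval, the standard estimate gives $\|v_j - I v_j\|_{[x_{i-1},x_i]} \leqslant (h_i^2/8)\|v_j''\|_{[x_{i-1},x_i]}$. Using the bound $\|v_j''\| \leqslant C$ from Lemma~\ref{L: derivative estimates of smooth and layer components} and $h_i \leqslant CN^{-1}$ (S--mesh in the coarse region and BS--mesh everywhere by Lemma~\ref{L: mesh size of BS mesh}), this piece contributes $CN^{-2}$ on both meshes.

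For the layer part $\vec{w}$, I would split the analysis by region:
\begin{itemize}
\item In the coarse central region $[\tau_\mu,1-\tau_\mu]$, the layer functions are exponentially small, $\mathbb{B}_\varepsilon(x), \mathbb{B}_\mu(x) \leqslant 2N^{-\sigma} \leqslant CN^{-2}$ for $\sigma \geqslant 2$, so $|w_j| \leqslant CN^{-2}$ and the interpolation error is trivially $CN^{-2}$.
\item In the fine-mesh regions on the S--mesh, using $h_i \leqslant C\varepsilon N^{-1}\ln N$ (inner fine zone) and $h_i \leqslant C\mu N^{-1}\ln N$ (middle fine zone) combined with the bounds $|w_j''| \leqslant C(\varepsilon^{-2}\mathbb{B}_\varepsilon + \mu^{-2}\mathbb{B}_\mu)$ from \eqref{Eq: derivative bounds for w1 w2 for k=0 to 2} gives $h_i^2 \|w_j''\|_{[x_{i-1},x_i]} \leqslant CN^{-2}\ln^2 N$.
\item In the fine-mesh regions on the BS--mesh, I would use the layer decomposition of Lemma~\ref{L: w derivatives} and exploit the defining inversion property of the mesh: the nodes $x_i$ are chosen precisely so that $\mathbb{B}_\varepsilon$ (respectively $\mathbb{B}_\mu$) is linear in $i$, so that the interpolation error of these layer functions is balanced across subintervals and yields $\|w_j - Iw_j\|_{[x_{i-1},x_i]} \leqslant CN^{-2}$ without the logarithmic factor.
\end{itemize}

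The main obstacle will be the last bullet: carefully verifying that, on the modified BS--mesh for the general case $\varepsilon \leqslant \mu$, the inversion property yields the sharp $CN^{-2}$ interpolation bound in each layer sub-region and at the transition nodes where the mesh switches from the $\varepsilon$-scale to the $\mu$-scale. This is analogous to the estimates in \cite{linss2009layer} but must be redone on the present five-subinterval mesh; Lemma~\ref{L: mesh size of BS mesh} already provides the key step-size estimate $h_i \leqslant CN^{-1}$ required to absorb the bounded-derivative pieces, and the remaining work is routine but must be carried out case by case analogously to the proof of Lemma~\ref{L: truncation error in the inner region 2}. Combining the two triangle-inequality contributions then yields the stated global bounds.
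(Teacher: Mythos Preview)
Your proposal is correct and follows exactly the same route as the paper: the triangle inequality splitting $\|\vec{Y}_h-\vec{y}\|$ into the nodal error (handled by Theorem~\ref{Th: main theorem}) plus the linear-interpolation error of the exact solution. The paper's own proof is in fact much terser---it merely invokes ``standard estimates for linear interpolation'' for the second term---so your region-by-region analysis via the smooth/layer decomposition is a faithful (and more explicit) unpacking of what the paper leaves implicit.
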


\begin{proof}
	~ Consider the error using the triangle inequality,
	\begin{equation*}
		\| \vec{Y}_h - \vec{y} \| \leqslant \| \vec{Y}_h - \vec{y}_h \| + \| \vec{y}_h - \vec{y} \|.
	\end{equation*}
	Here, $\vec{y}_h$ represents the piecewise linear interpolant of $\vec{y}$ at the mesh nodes. Applying standard estimates for linear interpolation, we arrive at the desired result.
	\hfill
\end{proof}

	%***************************************** NUMERICAL RESULTS *********************************************************

\section{Numerical results} \label{S: numerical results}
We consider the following examples from \citet{matthews2000parameter}, \citet{basha2015uniformly} and \citet{kaushik2024adaptive}. 

%**************************** EXAMPLE 1 ********************************************************************************

\begin{example} \label{E: example 1}
	\begin{align*}
		-\varepsilon^2 y_1'' + (x+1)^2y_1 - (x+0.5) y_2 & = x^5 - 0.08 \\
		-\mu^2 y_2'' - y_1 + 2 y_2 &= \sin(\pi x), 
	\end{align*}
	with the Robin boundary conditions
	\begin{alignat*}{2}
		y_1(0) - {\varepsilon} y_1'(0) &= 1, \qquad
		y_1(1) + {\varepsilon} y_1'(1) &&= 1, \\
		y_2(0) - {\mu} y_2'(0) &= 1, \qquad
		y_2(1) + {\mu} y_2'(1) &&= 1.
	\end{alignat*}
\end{example}

	Consider the numerical solution $\vec{Y}^N$ obtained from the discrete system \eqref{P: discretization system general}~--~\eqref{P: discretization robin boundary conditions} on a non-uniform mesh $\Omega^N$ consisting of $N$ subintervals. This mesh may be of either S-type ($\Omega_S^N$) or BS-type ($\Omega_{BS}^N$). The corresponding maximum pointwise error is given by $\| \vec{Y}^N - \vec{y} \|_{\Omega^N}$, where $\vec{y}$ is the exact solution. However, due to the unavailability of an explicit solution for Example~\ref{E: example 1}, the error is estimated numerically. To estimate this error, we construct a finer mesh $\Omega^{5N}$ of the same type (S or BS) as $\Omega^N$, using the same transition points but with five times as many intervals. Let $\vec{Y}^{5N}$ denote the numerical solution computed on this finer mesh. Then, the quantity 
\begin{equation*}
	E_{\varepsilon, \mu}^N = \| \vec{Y}^N - \vec{Y}^{5N} \|_{\Omega^N}
\end{equation*}
is used as an approximation to the maximum pointwise error.

	Given $\varepsilon = 10^{-j}$ for some non-negative integer $j$, the quantity $E_{\varepsilon}^N$ is defined as the maximum of the errors over multiple values of $\mu$, that is, 
\begin{equation}  \label{Eq: E eps N}
	E_{\varepsilon}^N = \max \{ E_{\varepsilon, 10^{-3}}^N, E_{\varepsilon,10^{-4}}^N, \ldots, E_{\varepsilon, 10^{-j}}^N \}.
\end{equation}
Finally, the parameter-uniform error over the full range of $\varepsilon$ values is given by $E^N = \max \{  E_{10^{-3}}^N, E_{10^{-4}}^N, \ldots, E_{10^{-14}}^N \}.$
%	\begin{equation*}
	%		E^N = \max \{  E_{10^{-3}}^N, E_{10^{-4}}^N, \ldots, E_{10^{-10}}^N \}. 
	%	\end{equation*}
Tables~\ref{ET: example 1 s-mesh} and~\ref{ET: example 1 bs-mesh} list the computed values of $E_\varepsilon^N$ (as given in \eqref{Eq: E eps N}) for various $N$ and $\varepsilon$ on the S~--~mesh and BS~--~mesh, respectively. The last row of each table highlights the maximum error in each column. These results show that the error is robust with respect to $\varepsilon$ and $\mu$, and is converging to zero as $N$ increases. This behavior is further confirmed in Figure~\ref{Fig: example 1 n eps plot}, which shows the maximum pointwise errors plotted against $N$ and $\varepsilon$, indicating that the error steadily decreases on both the S~--~mesh and BS~--~mesh (Figures~\ref{Fig: example 1 s mesh n eps plot} and~\ref{Fig: example 1 bs mesh n eps plot}).

To estimate the rate of convergence, we consider the two-mesh difference $D_{\varepsilon,\mu}^N = \| \vec{Y}^N - \vec{Y}_h^{2N} \|_{\Omega^N},$
%	\begin{equation*}
	%		D_{\varepsilon,\mu}^N = \| \vec{Y}^N - \vec{Y}_h^{2N} \|_{\Omega^N},
	%	\end{equation*}
where $\vec{Y}_h^{2N}$ is the solution's piecewise linear interpolant on the finer mesh with $2N$ intervals. The parameter-uniform two-mesh error is defined as $D^N = \max_{\varepsilon,\mu} \{ D_{\varepsilon,\mu}^N \},$
%	\begin{equation*}
	%		D^N = \max_{\varepsilon,\mu} \{ D_{\varepsilon,\mu}^N \},
	%	\end{equation*}
with $\varepsilon$ and $\mu$ varying over ${10^{-3}, \ldots, 10^{-14}}$. The rate of convergence is given by
\begin{equation*}
	p^N = \log_2 \left( \dfrac{D^N}{D^{2N}} \right) 
\end{equation*}
and the computed parameter-uniform order of convergence is
\begin{equation}  \label{Eq: p*}
	p^* = \min_N p^N.
\end{equation}

Tables~\ref{RCT: example 1 s-mesh} and~\ref{RCT: example 1 bs-mesh} present the computed values of $D^N$ and $p^N$ corresponding to the S~--~mesh and BS~--~mesh, respectively. The results demonstrate a convergence of order $N^{-2} \ln^2 N$ for the S~--~mesh and $N^{-2}$ for the BS~--~mesh. Based on equation~\eqref{Eq: p*}, the parameter-uniform order of convergence for Example~\ref{E: example 1} is computed as $p^* = 1.051$ for the S~--~mesh. In comparison, the BS~--~mesh yields $p^* = 1.633$.

	% ******************************** ERRORS IN THE COMPUTED SOLUTION TO EXAMPLE 1 (S-MESH) ********************

% ********************* RATES OF CONVERGENCE FOR EXAMPLE 1 (S MESH) ***********************************************

\begin{table}[!htbp]
	\centering
		\caption{Errors $E_\varepsilon^N$ in the numerical solution of Example \upshape{\ref{E: example 1}} (S~--~mesh)}
		\renewcommand{\arraystretch}{1.2}
		\begin{tabular}{c c c c c c c c}
			\toprule
			\multicolumn{1}{c}{$\varepsilon / N$} & 
			\multicolumn{1}{c}{$2^6$}             & 
			\multicolumn{1}{c}{$2^7$}             & 
			\multicolumn{1}{c}{$2^8$}             & 
			\multicolumn{1}{c}{$2^9$}             & 
			\multicolumn{1}{c}{$2^{10}$}          & 
			\multicolumn{1}{c}{$2^{11}$}          & 
			\multicolumn{1}{c}{$2^{12}$}          \\
			\midrule
			$10^{-3}$    &  1.086e$-$02  &  7.466e$-$03  &  2.569e$-$03  &  8.389e$-$04  &  2.599e$-$04  &  7.909e$-$05  &  2.354e$-$05  \\
			$10^{-4}$    &  2.537e$-$02  &  1.511e$-$02  &  9.072e$-$03  &  3.232e$-$03  &  1.058e$-$03  &  3.269e$-$04  &  9.768e$-$05  \\
			$10^{-5}$    &  7.498e$-$02  &  3.456e$-$02  &  1.341e$-$02  &  4.660e$-$03  &  1.520e$-$03  &  4.766e$-$04  &  1.454e$-$04  \\
			\vdots       &  \vdots       &  \vdots       &  \vdots       &  \vdots       &  \vdots       &  \vdots       &  \vdots     \\
			$10^{-14}$   &  7.498e$-$02  &  3.456e$-$02  &  1.341e$-$02  &  4.660e$-$03  &  1.520e$-$03  &  4.766e$-$04  &  1.454e$-$04  \\
			\midrule
			$E^N$        &  7.498e$-$02  &  3.456e$-$02  &  1.341e$-$02  &  4.660e$-$03  &  1.520e$-$03  &  4.766e$-$04  &  1.454e$-$04 \\
			\noalign{\vskip 1pt}
			\botrule
		\end{tabular}
		\label{ET: example 1 s-mesh}
		\centering
		\noindent \footnotesize \textit{CPU time for generating Table~\upshape{\ref{ET: example 1 s-mesh}} in MATLAB R2025a: 145.514907 seconds}
		\end{table}
		
	\begin{table}[!htbp]
		\centering
		\caption{Rates of convergence for Example~\upshape{\ref{E: example 1}} (S~--~mesh)}
		\renewcommand{\arraystretch}{1.2}
		\begin{tabular}{c c c c c c c c}
			\toprule
			\multicolumn{1}{c}{$N$} & 
			\multicolumn{1}{c}{$2^6$}             & 
			\multicolumn{1}{c}{$2^7$}             & 
			\multicolumn{1}{c}{$2^8$}             & 
			\multicolumn{1}{c}{$2^9$}             & 
			\multicolumn{1}{c}{$2^{10}$}          & 
			\multicolumn{1}{c}{$2^{11}$}          & 
			\multicolumn{1}{c}{$2^{12}$}          \\
			\midrule
			$D^N$        	& 5.405e$-$02 & 2.609e$-$02 & 1.034e$-$02 & 3.625e$-$03 & 1.186e$-$03 & 3.722e$-$04 & 1.136e$-$04 \\	
			$p^N$        	& 1.051       & 1.335       & 1.513       & 1.612       & 1.672       & 1.713       &  \\	
			\noalign{\vskip 1pt}
			\botrule
		\end{tabular}
		\label{RCT: example 1 s-mesh}
				\centering
		\noindent \footnotesize \textit{CPU time for generating Table~\upshape{\ref{RCT: example 1 s-mesh}} in MATLAB R2025a: 32.798053 seconds}
	
\end{table}

		% ******************************** ERRORS IN THE COMPUTED SOLUTION TO EXAMPLE 1 (BS-MESH) ********************

% ********************* RATES OF CONVERGENCE FOR EXAMPLE 1 (BS MESH) ***********************************************

\begin{table}[!htbp]
	\centering
		\caption{Errors $E_\varepsilon^N$ in the numerical solution of Example \upshape{\ref{E: example 1}} (BS~--~mesh)}
		\renewcommand{\arraystretch}{1.2}
		\begin{tabular}{c c c c c c c c}
			\toprule
			\multicolumn{1}{c}{$\varepsilon / N$} & 
			\multicolumn{1}{c}{$2^6$}             & 
			\multicolumn{1}{c}{$2^7$}             & 
			\multicolumn{1}{c}{$2^8$}             & 
			\multicolumn{1}{c}{$2^9$}             & 
			\multicolumn{1}{c}{$2^{10}$}          & 
			\multicolumn{1}{c}{$2^{11}$}          & 
			\multicolumn{1}{c}{$2^{12}$}          \\
			\midrule
			$10^{-3}$    &  1.273e$-$02  &  5.999e$-$03  &  1.693e$-$03  &  4.492e$-$04  &  1.169e$-$04  &  2.982e$-$05  &  7.557e$-$06 \\
			$10^{-4}$    &  3.395e$-$02  &  1.070e$-$02  &  3.053e$-$03  &  8.271e$-$04  &  2.170e$-$04  &  5.552e$-$05  &  1.384e$-$05  \\
			$10^{-5}$    &  3.395e$-$02  &  1.070e$-$02  &  3.053e$-$03  &  8.271e$-$04  &  2.170e$-$04  &  5.551e$-$05  &  1.384e$-$05  \\
			\vdots       &  \vdots       &  \vdots       &  \vdots       &  \vdots       &  \vdots       &  \vdots       &  \vdots     \\
			$10^{-14}$   &  3.395e$-$02  &  1.070e$-$02  &  3.053e$-$03  &  8.271e$-$04  &  2.170e$-$04  &  5.551e$-$05  &  1.384e$-$05  \\
			\midrule
			$E^N$        &  3.395e$-$02  &  1.070e$-$02  &  3.053e$-$03  &  8.271e$-$04  &  2.170e$-$04  &  5.551e$-$05  &  1.384e$-$05  \\
			\noalign{\vskip 1pt}
			\botrule
		\end{tabular}
		\label{ET: example 1 bs-mesh}
		\centering
		\noindent \footnotesize \textit{CPU time for generating Table~\upshape{\ref{ET: example 1 bs-mesh}} in MATLAB R2025a: 161.229879 seconds}
\end{table}

	% Second table

	\begin{table}
		\centering
		\caption{Rates of convergence for Example~\upshape{\ref{E: example 1}} (BS~--~mesh)}
		\renewcommand{\arraystretch}{1.2}
		\begin{tabular}{c c c c c c c c}
			\toprule
			\multicolumn{1}{c}{$N$} & 
			\multicolumn{1}{c}{$2^6$}             & 
			\multicolumn{1}{c}{$2^7$}             & 
			\multicolumn{1}{c}{$2^8$}             & 
			\multicolumn{1}{c}{$2^9$}             & 
			\multicolumn{1}{c}{$2^{10}$}          & 
			\multicolumn{1}{c}{$2^{11}$}          & 
			\multicolumn{1}{c}{$2^{12}$}          \\
			\midrule
			$D^N$  	& 2.568e$-$02  &  8.280e$-$03  &  2.379e$-$03  &  6.457e$-$04  &  1.695e$-$04  &  4.337e$-$05  &  1.082e$-$05  \\	
			$p^N$  	& 1.633  & 1.799  & 1.881  & 1.930  & 1.967  & 2.004  &        \\	
			\noalign{\vskip 1pt}
			\botrule
		\end{tabular}
		\label{RCT: example 1 bs-mesh}
				\centering
		\noindent \footnotesize \textit{CPU time for generating Table~\upshape{\ref{RCT: example 1 bs-mesh}} in MATLAB R2025a: 35.788285 seconds}
	
\end{table}

	%***************************************** EXAMPLE 2 *********************************************************

\begin{example} \label{E: example 2}
	\begin{align*}
		-\varepsilon^2 y_1'' + 2(x+1)^2y_1 - (1+x^3) y_2 & = 2\exp(x) \\
		-\mu^2 y_2'' - 2 \cos \left( \dfrac{\pi x}{4} \right)y_1 + 2.2 \exp(1-x)y_2 &= 10x+1; 
	\end{align*} %
	with the Robin boundary conditions %
	\begin{alignat*}{2}
		y_1(0) - {\varepsilon} y_1'(0) &= 0; \qquad
		2y_1(1) + {\varepsilon} y_1'(1) &&= 1; \\
		y_2(0) - 3{\mu} y_2'(0) &= 0; \qquad
		y_2(1) + {\mu} y_2'(1) &&= 1.
	\end{alignat*}
\end{example}%

The computed values of $E_\varepsilon^N$ for Example~\ref{E: example 2} are listed in Tables~\ref{ET: example 2 s-mesh} and~\ref{ET: example 2 bs-mesh} for the S~--~mesh and BS~--~mesh, respectively. The values of $E_\varepsilon^N$ are plotted in Figure~\ref{Fig: example 2 n eps plot}, with separate plots for the S~--~mesh in Figure~\ref{Fig: example 2 s mesh n eps plot} and for the BS~--~mesh in  Figure~\ref{Fig: example 2 bs mesh n eps plot}.  The computed parameter-uniform order of convergence on the S-mesh is $p^* = 1.146$ as shown in Table~\ref{RCT: example 2 s-mesh}. On the BS~--~mesh, the value is $p^* = 1.607$ as given in Table~\ref{RCT: example 2 bs-mesh}. 

Examples~\ref{E: example 1} and~\ref{E: example 2} show that the BS~--~mesh yields smaller parameter-uniform errors than the S~--~mesh (refer Tables~\ref{ET: example 1 s-mesh} and \ref{ET: example 1 bs-mesh}). This confirms that the newly constructed BS~--~mesh performs better, with a higher order of convergence-a common principle used to compare numerical methods. To support this visually, log~--~log plots for the S~--~mesh (Figure~\ref{Fig: s mesh loglog plot}) and BS~--~mesh (Figure~\ref{Fig: bs mesh loglog plot}) are shown in Figure~\ref{Fig: loglog plot}. These plots clearly illustrate that the numerical results match the theoretical prediction given in Theorem~\ref{Th: main theorem}.

		% ******************************** ERRORS IN THE COMPUTED SOLUTION TO EXAMPLE 2 (S-MESH) ********************

% ********************* RATES OF CONVERGENCE FOR EXAMPLE 2 (S MESH) ***********************************************

\begin{table}[!htbp]
	\centering
	% First table
		\caption{Errors $E_\varepsilon^N$ in the numerical solution of Example \upshape{\ref{E: example 2}} (S~--~mesh)}
		\renewcommand{\arraystretch}{1.2}
		\begin{tabular}{c c c c c c c c}
			\toprule
			\multicolumn{1}{c}{$\varepsilon / N$} & 
			\multicolumn{1}{c}{$2^6$}             & 
			\multicolumn{1}{c}{$2^7$}             & 
			\multicolumn{1}{c}{$2^8$}             & 
			\multicolumn{1}{c}{$2^9$}             & 
			\multicolumn{1}{c}{$2^{10}$}          & 
			\multicolumn{1}{c}{$2^{11}$}          & 
			\multicolumn{1}{c}{$2^{12}$}          \\
			\midrule
			$10^{-3}$    &  2.381e$-$02  &  2.530e$-$02  &  8.502e$-$03  &  2.699e$-$03  &  8.380e$-$04  &  2.537e$-$04  &  7.549e$-$05  \\
			$10^{-4}$    &  1.437e$-$01  &  4.657e$-$02  &  1.278e$-$02  &  3.573e$-$03  &  1.170e$-$03  &  3.613e$-$04  &  1.079e$-$04  \\
			$10^{-5}$    &  2.389e$-$01  &  1.046e$-$01  &  3.960e$-$02  &  1.372e$-$02  &  4.508e$-$03  &  1.429e$-$03  &  4.411e$-$04  \\
			$10^{-6}$    &  2.389e$-$01  &  1.046e$-$01  &  3.961e$-$02  &  1.373e$-$02  &  4.509e$-$03  &  1.429e$-$03  &  4.412e$-$04  \\
			\vdots       &  \vdots       &  \vdots       &  \vdots       &  \vdots       &  \vdots       &  \vdots       &  \vdots     \\
			$10^{-14}$   &  2.389e$-$01  &  1.046e$-$01  &  3.961e$-$02  &  1.373e$-$02  &  4.509e$-$03  &  1.429e$-$03  &  4.412e$-$04  \\
			\midrule
			$E^N$        &  2.389e$-$01  &  1.046e$-$01  &  3.961e$-$02  &  1.373e$-$02  &  4.509e$-$03  &  1.429e$-$03  &  4.412e$-$04 \\
			\noalign{\vskip 1pt}
			\botrule
		\end{tabular}
		\label{ET: example 2 s-mesh}
	 	\centering
		\noindent \footnotesize \textit{CPU time for generating Table~\upshape{\ref{ET: example 2 s-mesh}} in MATLAB R2025a: 144.514505 seconds}
\end{table}

	% Second table
	\begin{table}
			\centering
		\caption{Rates of convergence for Example~\upshape{\ref{E: example 2}} (S~--~mesh)}
		\renewcommand{\arraystretch}{1.2}
		\begin{tabular}{c c c c c c c c}
			\toprule
			\multicolumn{1}{c}{$N$} & 
			\multicolumn{1}{c}{$2^6$}             & 
			\multicolumn{1}{c}{$2^7$}             & 
			\multicolumn{1}{c}{$2^8$}             & 
			\multicolumn{1}{c}{$2^9$}             & 
			\multicolumn{1}{c}{$2^{10}$}          & 
			\multicolumn{1}{c}{$2^{11}$}          & 
			\multicolumn{1}{c}{$2^{12}$}          \\
			\midrule
			$D^N$  	& 1.770e$-$01  &  7.997e$-$02  &  3.071e$-$02  &  1.070e$-$02  &  3.520e$-$03  &  1.116e$-$03  &  3.447e$-$04  \\	
			$p^N$   & 1.146  & 1.381  & 1.521  & 1.604  & 1.657  & 1.696       &  \\	
			\noalign{\vskip 1pt}
			\botrule
		\end{tabular}
		\label{RCT: example 2 s-mesh}
		\centering
		\noindent \footnotesize \textit{CPU time for generating Table~\upshape{\ref{RCT: example 2 s-mesh}} in MATLAB R2025a: 31.731012 seconds}

\end{table}

% ********************* ERRORS IN THE COMPUTED SOLUTION TO EXAMPLE 2 (BS-MESH) ***********

% *********** RATES OF CONVERGENCE FOR EXAMPLE 2 (BS MESH) ******************************

\begin{table}[!htbp]

	% First table
		\centering
		\caption{Errors $E_\varepsilon^N$ in the numerical solution of Example \upshape{\ref{E: example 2}} (BS~--~mesh)}
		\renewcommand{\arraystretch}{1.2}
		\begin{tabular}{c c c c c c c c}
			\toprule
			\multicolumn{1}{c}{$\varepsilon / N$} & 
			\multicolumn{1}{c}{$2^6$}             & 
			\multicolumn{1}{c}{$2^7$}             & 
			\multicolumn{1}{c}{$2^8$}             & 
			\multicolumn{1}{c}{$2^9$}             & 
			\multicolumn{1}{c}{$2^{10}$}          & 
			\multicolumn{1}{c}{$2^{11}$}          & 
			\multicolumn{1}{c}{$2^{12}$}          \\
			\midrule
			$10^{-3}$    &  2.982e$-$02  &  1.105e$-$02  &  3.657e$-$03  &  9.950e$-$04  &  2.622e$-$04  &  8.084e$-$05  &  2.539e$-$05  \\
			$10^{-4}$    &  2.195e$-$01  &  7.002e$-$02  &  1.996e$-$02  &  5.376e$-$03  &  1.400e$-$03  &  3.555e$-$04  &  8.799e$-$05  \\
			$10^{-5}$    &  2.197e$-$01  &  7.010e$-$02  &  1.998e$-$02  &  5.381e$-$03  &  1.401e$-$03  &  3.557e$-$04  &  8.802e$-$05  \\
			$10^{-6}$    &  2.197e$-$01  &  7.011e$-$02  &  1.998e$-$02  &  5.381e$-$03  &  1.401e$-$03  &  3.557e$-$04  &  8.803e$-$05  \\
			\vdots       &  \vdots       &  \vdots       &  \vdots       &  \vdots       &  \vdots       &  \vdots       &  \vdots     \\
			$10^{-14}$   &  2.197e$-$01  &  7.011e$-$02  &  1.998e$-$02  &  5.381e$-$03  &  1.401e$-$03  &  3.558e$-$04  &  8.803e$-$05  \\
			\midrule
			$E^N$    &  2.197e$-$01  &  7.011e$-$02  &  1.998e$-$02  &  5.381e$-$03  &  1.401e$-$03  &  3.558e$-$04  &  8.803e$-$05  \\
			\noalign{\vskip 1pt}
			\botrule
		\end{tabular}
		\label{ET: example 2 bs-mesh}
		\centering
		\noindent \footnotesize \textit{CPU time for generating Table~\upshape{\ref{ET: example 2 bs-mesh}} in MATLAB R2025a: 158.558428 seconds}
\end{table}
	
	% Second table
	\begin{table}
		\centering
		\caption{Rates of convergence for Example~\upshape{\ref{E: example 2}} (BS~--~mesh)}
		\renewcommand{\arraystretch}{1.2}
		\begin{tabular}{c c c c c c c c}
			\toprule
			\multicolumn{1}{c}{$N$} & 
			\multicolumn{1}{c}{$2^6$}             & 
			\multicolumn{1}{c}{$2^7$}             & 
			\multicolumn{1}{c}{$2^8$}             & 
			\multicolumn{1}{c}{$2^9$}             & 
			\multicolumn{1}{c}{$2^{10}$}          & 
			\multicolumn{1}{c}{$2^{11}$}          & 
			\multicolumn{1}{c}{$2^{12}$}          \\
			\midrule
			$D^N$  	& 1.649e$-$01  &  5.412e$-$02  &  1.556e$-$02  &  4.200e$-$03  &  1.095e$-$03  &  2.779e$-$04  &  6.877e$-$05  \\	
			$p^N$        	& 1.607  & 1.798  & 1.889  & 1.940  & 1.978  & 2.015       &  \\	
			\noalign{\vskip 1pt}
			\botrule
		\end{tabular}
		\label{RCT: example 2 bs-mesh}
				\centering
		\noindent \footnotesize \textit{CPU time for generating Table~\upshape{\ref{RCT: example 2 bs-mesh}} in MATLAB R2025a: 35.714245 seconds}
	
\end{table}

%******************* EXAMPLE 1 N - EPS PLOT ********************************************************

%********************** EXAMPLE 2 N - EPS PLOT ***************************************************
\begin{figure}[!htbp]
	\centering
	
	% First row of subfigures
	\begin{subfigure}[b]{0.49\textwidth}
		\centering
		\includegraphics[width=\textwidth]{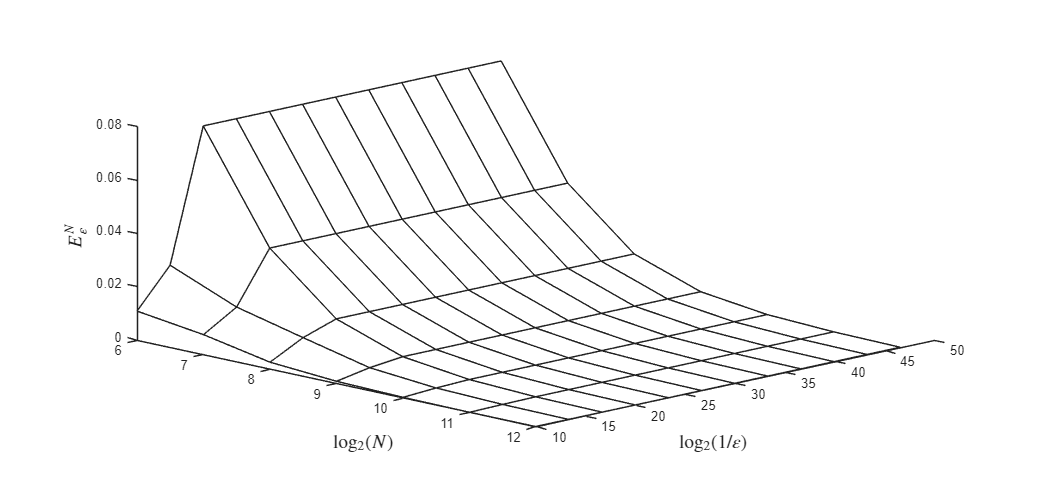}
		\caption{On S--mesh}
		\label{Fig: example 1 s mesh n eps plot}
	\end{subfigure}
	\hfill
	\begin{subfigure}[b]{0.49\textwidth}
		\centering
		\includegraphics[width=\textwidth]{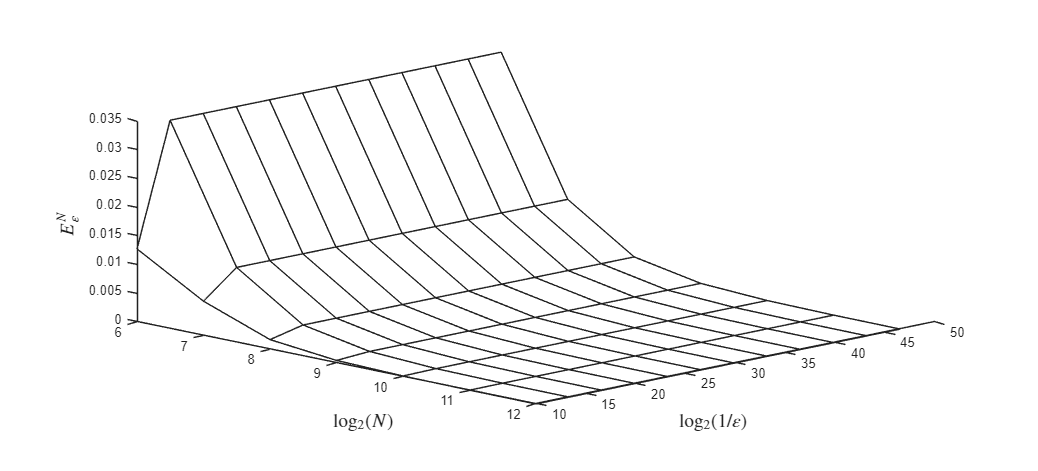}
		\caption{On BS--mesh}
		\label{Fig: example 1 bs mesh n eps plot}
	\end{subfigure}
	\caption{Maximum pointwise errors $E_\varepsilon^N$ vs. $N$ and $\varepsilon$ for Example~\ref{E: example 1}}
	\label{Fig: example 1 n eps plot}
	
	\vspace{1em} % vertical space between rows
	
	% Second row of subfigures
	\begin{subfigure}[b]{0.49\textwidth}
		\centering
		\includegraphics[width=\textwidth]{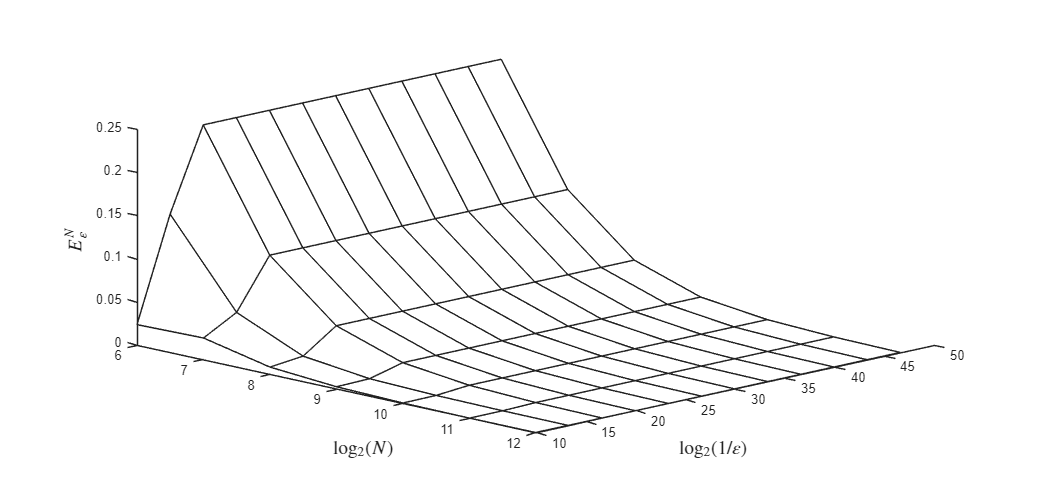}
		\caption{On S--mesh}
		\label{Fig: example 2 s mesh n eps plot}
	\end{subfigure}
	\hfill
	\begin{subfigure}[b]{0.49\textwidth}
		\centering
		\includegraphics[width=\textwidth]{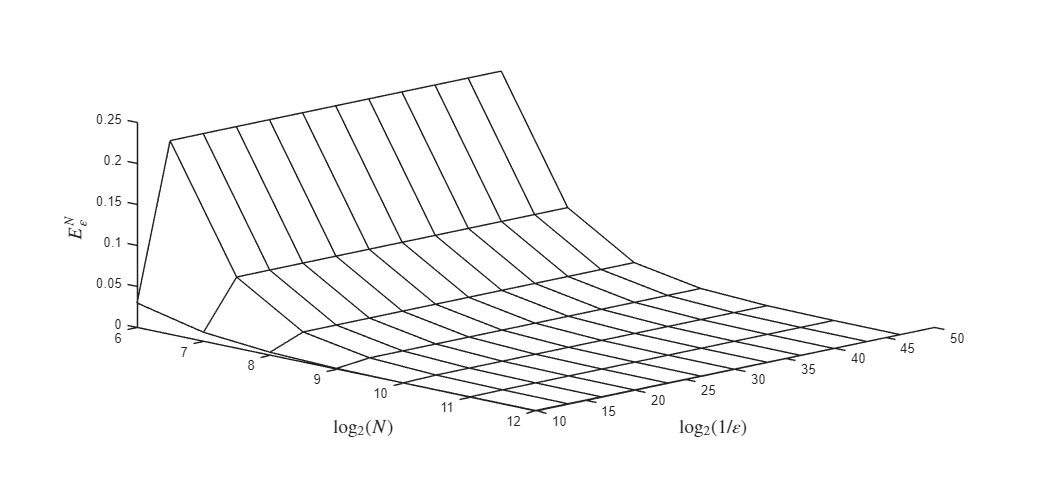}
		\caption{On BS--mesh}
		\label{Fig: example 2 bs mesh n eps plot}
	\end{subfigure}
	\caption{Maximum pointwise errors $E_\varepsilon^N$ vs. $N$ and $\varepsilon$ for Example~\ref{E: example 2}}
	\label{Fig: example 2 n eps plot}
	
\end{figure}

% ******************************** LOGLOG PLOT *********************************************
\begin{figure}[!htbp]
	\centering
	
	\begin{subfigure}[b]{0.45\textwidth}
		\centering
		\includegraphics[width=\textwidth]{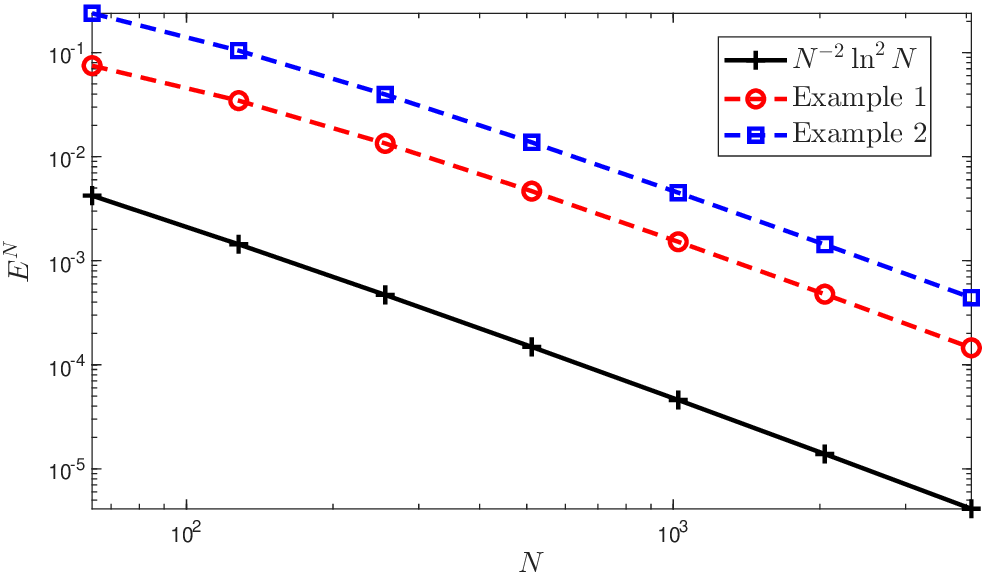}
		\caption{On S-mesh}
		\label{Fig: s mesh loglog plot}
	\end{subfigure}
	\hfill
	\begin{subfigure}[b]{0.45\textwidth}
		\centering
		\includegraphics[width=\textwidth]{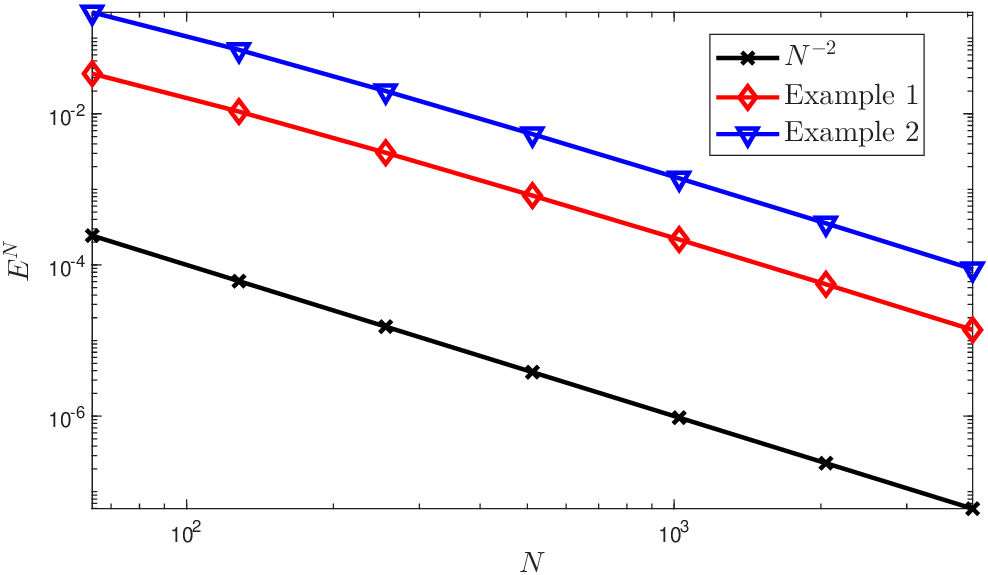}
		\caption{On BS-mesh}
		\label{Fig: bs mesh loglog plot}
	\end{subfigure}
	
	\caption{Loglog plots of $E^N$ vs.\ $N$}
	\label{Fig: loglog plot}
\end{figure}

%******************************* CONCLUSION *****************************************************************
\section*{Conclusion}
In this work, we considered a weakly-coupled system of singularly perturbed problems of reaction-diffusion-type with Robin boundary conditions, where the leading terms are multiplied by small positive parameters that may vary in magnitude. The numerical solution was obtained using a piecewise-uniform Shishkin mesh and a modified Bakhvalov-Shishkin (BS) mesh. We carried out a detailed truncation error analysis and established the stability of the method. Theoretical results show that the scheme achieves exact second-order convergence on the BS mesh and nearly second-order convergence on the Shishkin mesh. To support these findings, two numerical experiments were conducted, confirming the parameter-uniform convergence of the method. The results clearly demonstrate that the proposed BS mesh yields more accurate solutions than the standard S-mesh for the same numerical scheme.

\backmatter

%**************************** DATA AVAILABILITY **********************************************************

\section*{Data availability}
No data was utilized for the research work done in this article.

%***************************** ACKNOWLEDGMENTS ************************************************************

\section*{Acknowledgements}
The first author expresses gratitude to the Ministry of Education (MoE), Govt. of India for the financial support.

%****************** CREDIT AUTHORSHIP CONTRIBUTION STATEMENT ****************************

\section*{CRediT authorship contribution statement} 

\textit{Kousalya Ramanujam:} Conceptualization, Methodology, Software, Formal analysis, Investigation, Writing - original draft. \textit{Vembu Shanthi:} Conceptualization, Methodology, Formal analysis, Investigation, Supervision, Writing - review \& editing.

%**************************** DECLARATION OF COMPETING INTEREST ************************

\section*{Declaration of competing interest} 

The authors have no competing interests to declare that are relevant to the content of this article. \\

\bibliographystyle{sn-mathphys-num}

\end{document}